\documentclass[11pt,a4paper]{amsart}
\usepackage{amssymb,amsmath}
\textwidth16cm
\hfuzz 9pt
\numberwithin{equation}{section}
\vspace{8cm}
\oddsidemargin-0.5cm
\evensidemargin-0.5cm
\newtheorem{theorem}{Theorem}[section]

\newtheorem{corollary}[theorem]{Corollary}
\newtheorem{lemma}[theorem]{Lemma}

\newtheorem{remark}[theorem]{Remark}


\newcommand{\R}{{\mathbb R}}
\newcommand{\Rn}{{\mathbb R}^n}
\newcommand{\Semi}{{\mathbb R}^n_+}


\newcommand{\ro}{\rho}

\newcommand{\Ll}{\frac{\Lambda}{\lambda}}
\newcommand{\lL}{\frac{\lambda}{\Lambda}}


\newcommand{\beq}{\begin{equation}}
\newcommand{\eeq}{\end{equation}}
\newcommand{\reff}[1]{{\rm (\ref{#1})}}
\newcommand{\dis}{\displaystyle}

\newcommand{\noi}{\noindent}


\newcommand{\Mm}{\mathcal{M}^-_{\lambda ,\Lambda}}
\newcommand{\Mp}{\mathcal{M}^+_{\lambda ,\Lambda}}



\begin{document}

\title [Fully nonlinear elliptic inequalities in halfspaces]{Explicit  subsolutions and a  Liouville theorem  for fully nonlinear uniformly elliptic inequalities in halfspaces}

\author[F. Leoni]{Fabiana  Leoni}

\address{Sapienza  Universit\`a di Roma \\
P.le Aldo Moro 2, 00185 Roma, Italy\\
leoni@mat.uniroma1.it}

\date{December 5, 2011}

\keywords{Fully nonlinear uniformly elliptic equations in halfspaces, homogeneous subsolutions, viscosity supersolutions, nonexistence results, critical exponents}

\subjclass[2000]{35J60, 35B53}

\begin{abstract}
We prove a Liouville type theorem for arbitrarily growing positive  viscosity supersolutions of fully nonlinear uniformly elliptic  equations in halfspaces. Precisely, let $\Mm$ be the Pucci's inf--operator, defined as the infimum of all linear uniformly elliptic operators with ellipticity constants $\Lambda \geq \lambda >0$. Then, we prove that  the inequality $\Mm (D^2u) +u^p\leq 0$ does not have any positive viscosity solution in a halfspace provided that  $-1\leq p \leq \frac{\Ll n+1}{\Ll n-1}$, whereas positive solutions do exist if either $p< -1$ or $p> \frac{\Ll (n-1)+2}{\Ll (n-1)}$. This will be accomplished by constructing explicit subsolutions of the homogeneous equation $\Mm (D^2u)=0$  and by proving a nonlinear version in a halfspace of the classical Hadamard three-circles theorem for entire superharmonic functions.
\end{abstract}

\maketitle

\section{Introduction}

We focus on  positive supersolutions of second order  fully nonlinear uniformly elliptic equations of the form either
\beq \label{F}
F(x, D^2u) =0 \qquad \hbox{ in }\ \Semi\
\eeq
or
\beq \label{Fp}
F(x, D^2u) + u^p=0 \qquad \hbox{ in }\ \Semi\, ,
\eeq
where  $\Semi$ is the halfspace $\{ x=(x', x_n)\in \R^{n-1}\times \R \ :\ x_n>0\}$, with $n\geq 2$. Here $F: \Semi \times \mathcal{S}_n \to \R$ is a continuous function of the space variable $x\in \Semi$ and of the Hessian matrix $D^2u\in  \mathcal{S}_n$, the set of symmetric $n\times n$ matrices. 

For equation \reff{F} we first construct some explicit homogeneous subsolutions, vanishing on the boundary $\partial \Semi \setminus \{0\}$, and then we use them to derive lower bounds and monotonicity properties for  nonnegative  supersolutions. The result we obtain closely resembles the classical Hadamard three--spheres theorem for bounded from below superharmonic functions,  and   it will be applied in order to obtain a Liouville type theorem  for positive supersolutions of \reff{Fp}.

Let us recall that the Liouville property for equations posed in halfspaces and having power--like zero order terms is one of the crucial steps for applying the blow--up method developed in \cite{GS}, which yields
 $L^\infty$ a priori estimates  for solutions of boundary value problems in bounded domains. Liouville type properties  have been largely studied mainly in case of semilinear equations, and our contribution is devoted to the extension to the fully nonlinear framework.

We assume that the operator $F$ is uniformly elliptic with ellipticity constants $\Lambda \geq \lambda >0$, that is $F$ is assumed to satisfy
\beq \label{eu}
\lambda\, {\rm tr}P \leq F(x, M+P)- F(x,M)\leq \Lambda \, {\rm tr}P
\eeq
for all $x\in \Semi$ and for every $M,\ P \in \mathcal{S}_n$, with $P\geq O$ (i.e. nonnegative definite).

We further assume that $F(x, O)=0$, so that inequalities \reff{eu} amount to
$$
\lambda \, {\rm tr} M^+ - \Lambda \, {\rm tr} M^- \leq F(x, M)\leq \Lambda \, {\rm tr} M^+ -\lambda \, {\rm tr} M^-
$$
for all $x\in \Semi$ and $M\in \mathcal{S}_n$, where $M^+,\ M^- \geq O$ are the only nonnegative definite matrices decomposing $M$ as $M=M^+-M^-$ and satisfying $M^+ M^-=0$. Let us recall that the left and the right hand side of the above inequality represent the Pucci extremal operators (see e.g. \cite{cafca}), that are the special uniformly elliptic operators given by
$$
\begin{array}{c}
\dis \Mm (M)= \lambda \, \sum_{\mu_i >0} \mu_i +\Lambda \, \sum_{\mu_i <0} \mu_i =\inf_{A\in \mathcal{A}_{\lambda ,\Lambda}} {\rm tr}( AM)\\[2ex]
\dis  \mathcal{M}^+_{\lambda ,\Lambda}  (M)= \Lambda \, \sum_{\mu_i >0} \mu_i + \lambda \, \sum_{\mu_i <0} \mu_i =\sup_{A\in \mathcal{A}_{\lambda ,\Lambda}} {\rm tr}( AM)
\end{array}
$$
where $\mu_1 ,\ldots , \mu_n$ stand for the eigenvalues of $M$ and $ \mathcal{A}_{\lambda ,\Lambda}$ is the set of all symmetric matrices whose eigenvalues belong to the closed interval $[ \lambda , \ \Lambda ]$. Thus, the uniform ellipticity condition \reff{eu} is equivalent for the operator $F$ to satisfy
$$
\Mm (M) \leq F(x,M) \leq  \mathcal{M}^+_{\lambda ,\Lambda}  (M)
$$
for every $x$ and every $M$, and  this implies  that  if $u$ is a solution (or a supersolution) either of  \reff{F} or of \reff{Fp}, then $u$ satisfies respectively either
\beq \label{mm}
\Mm (D^2u) \leq 0 \quad \hbox{ in }\ \Semi 
\eeq
or
\beq \label{mmeq}
\Mm (D^2u) +u^p\leq 0 \quad \hbox{ in }\ \Semi\, .
\eeq
In this respect, \reff{mm} and \reff{mmeq} are  the inequalities  naturally associated with all uniformly elliptic equations of the form either \reff{F} or \reff{Fp} respectively.

Our goal is to identify an explicit  range of values for the exponent $p$ for which \reff{mmeq} does not admit positive solutions. Note that weak solutions of inequality \reff{mmeq}, because of   non divergence form of the principal part, have to be meant in the viscosity sense, and we refer to \cite{cafca, user} for the  viscosity solutions theory for Pucci and more general fully nonlinear operators.

As a consequence of our results, we obtain  the following theorem.

\begin{theorem} \label{Lio} Let $n\geq 2$ and $-1\leq p\leq \frac{\Ll n+1}{\Ll n-1}$. Then, there does not exist any positive viscosity solution of inequality  \reff{mmeq}.
\end{theorem}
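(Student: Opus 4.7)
My plan is to argue by contradiction, using as black boxes the two earlier contributions of the paper: an explicit subsolution $\Phi$ of the homogeneous equation $\Mm (D^2\Phi)=0$ in $\Semi$, positive in the interior, vanishing on $\partial\Semi\setminus\{0\}$, and homogeneous of some degree $\alpha^*$; and the nonlinear halfspace Hadamard three-spheres type theorem for positive viscosity supersolutions of $\Mm (D^2u)\leq 0$. The algebraic identity $\frac{\alpha^*+2}{\alpha^*}=\frac{\Ll n+1}{\Ll n-1}$ with $\alpha^*=\Ll n-1$ is what links the homogeneity of $\Phi$ to the critical exponent in the statement, so I would start by singling out this value of $\alpha^*$.

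Assuming $u>0$ is a viscosity solution of \reff{mmeq}, since $u^p\geq 0$ one immediately gets that $u$ is a positive viscosity supersolution of $\Mm (D^2u)\leq 0$. Applying the three-half-spheres theorem on half-annuli $\{\rho<|x|<R\}\cap\Semi$ and letting $R\to\infty$ then produces a lower bound of the form $u(x)\geq c\,\Phi(x)$, which in particular forces the growth $u(x)\geq c\,|x|^{\alpha^*}$ along any ray staying inside a fixed interior cone of the halfspace. This is the sole use of the homogeneous theory in the argument and the reason the exponent in the statement depends on $\Ll$ and $n$ in the specific way it does.

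Inserting this back into $\Mm (D^2u)\leq -u^p$, one has $\Mm (D^2u)(x)\leq -c^p|x|^{p\alpha^*}$ on the same cone. An ABP/comparison argument on a half-ball of radius comparable to $|x_0|$ against a quadratic barrier whose Pucci infimum matches the right-hand side then upgrades the pointwise lower bound at $x_0$ to $u(x_0)\geq c'|x_0|^{p\alpha^*+2}$. In the strictly subcritical regime $p<\frac{\alpha^*+2}{\alpha^*}$ one has $p\alpha^*+2>\alpha^*$, so iterating this bootstrap drives the exponent strictly beyond $\alpha^*$; this is incompatible with the sharp form of the three-spheres theorem, in which $\alpha^*$ is the maximal admissible homogeneity of a nonnegative subsolution of the homogeneous equation vanishing on $\partial\Semi$. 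The borderline value $p=\frac{\Ll n+1}{\Ll n-1}$ I would recover by a strict-inequality refinement in the three-spheres monotonicity, or by perturbing $p$ downward and passing to the limit.

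The main obstacle I anticipate is making the bootstrap work uniformly across the whole range $-1\leq p\leq\frac{\Ll n+1}{\Ll n-1}$: the iteration $\alpha_{k+1}=p\,\alpha_k+2$ diverges when $p\geq 1$ but has a finite attracting fixed point when $p<1$, so the sublinear and negative-$p$ subranges presumably need a separate argument, exploiting the blow-up of $u^p$ as $u\to 0$ on $\partial\Semi$ (which for $-1\leq p<0$ forces an unbounded singular datum in $-\Mm (D^2u)$ at the boundary) rather than polynomial bootstrapping in the interior. A clean viscosity implementation of ABP at each step, and control of the constants so that the estimate does not degenerate near $\partial\Semi$ across iterations, are the remaining technical points that I would expect to absorb most of the work.
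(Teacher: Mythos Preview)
Your scheme contains a real gap: the bootstrap you describe does not produce a contradiction, because the three-surfaces theorem only bounds supersolutions from \emph{below}. The subsolution $\Phi$ is homogeneous of degree $-\alpha^*$ with $\alpha^*=\Ll n-1$, so the Hadamard comparison yields the decay estimate $u(x)\geq c\,\Phi(x)\sim c\,|x|^{-\alpha^*}$ along interior rays, not the growth bound $u\geq c\,|x|^{\alpha^*}$ that you write. Feeding this into $-\Mm(D^2u)\geq u^p$ and comparing on balls of scale $|x_0|$ correctly improves the lower bound to decay exponent $p\alpha^*-2$, and iterating does push the exponent downward (the fixed point $2/(p-1)$ of $\gamma\mapsto p\gamma-2$ is repelling for $p>1$). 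But a supersolution of $\Mm(D^2u)\leq 0$ is under no obligation to be small, and nothing in the three-surfaces theorem prevents it from growing; the assertion that ``$\alpha^*$ is the maximal admissible homogeneity'' refers to homogeneous subsolutions vanishing on the boundary, not to supersolutions. To close the argument you need an \emph{upper} bound on $u$, and your proposal has none.

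The paper supplies exactly this missing ingredient. For the superlinear range it rescales $u_r(x)=u(rx)$ and touches it from below by a fixed smooth cutoff on an interior ball, obtaining directly $\mu(r)\leq C\,r^{-(p+1)/(p-1)}$ where $\mu(r)=\inf_{\mathcal B_r}u/x_n$; since $(p+1)/(p-1)>\Ll n$ precisely when $p<\frac{\Ll n+1}{\Ll n-1}$, this upper bound collides with the monotonicity of $r^{\Ll n}\mu(r)$ given by the Hadamard theorem. At the critical exponent the two bounds are compatible, and the paper performs one genuine bootstrap step: from $u\geq c\,\Phi$ and the equation it compares with a new explicit subsolution $\Gamma$ carrying a logarithmic factor $\ln d(x)$, yielding $r^{\Ll n}\mu(r)\geq c\ln r$, which then contradicts the upper bound $r^{\Ll n}\mu(r)\leq C$. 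Your proposed limit $p\uparrow p^*$ cannot recover this, since the constants in the subcritical argument blow up. Finally, the full range $-1\leq p<1$ is not handled by this elementary route at all; the paper deduces it from the Armstrong--Sirakov characterization $p_*=-1$, $p^*=1+2/\alpha$, together with the new bound $\alpha\leq\Ll n-1$ extracted from $\Phi$.
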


If $\Lambda =\lambda$, then \reff{mmeq} becomes, up to a scaling factor for  the function $u$, the semilinear inequality
\beq \label{lin}
\Delta u +u^p\leq 0\, ,
\eeq
and Theorem \ref{Lio} thus gives an  extension of the well known fact that  inequality \reff{lin} does not have positive solutions in a halfspace for $-1\leq p\leq \frac{n+1}{n-1}$ (see e.g. \cite{AS}).  In other words, $-1$ and  $\frac{\Ll n+1}{\Ll n-1}$ work as  critical exponents for the Liouville property for operator $\Mm$ in a halfspace.

To show the existence of critical exponents for inequality \reff{mmeq} we can apply  the same argument used in \cite{KLS} for linear equations. Indeed, a straightforward computation shows that if $p>1$ (or if $p<1$) and $u$ is a positive  solution of inequality \reff{mmeq}, then for any $q>p$ (or $q<p$, respectively) the function $v=\left( \frac{p-1}{q-1}\right)^{1/(q-1)} u^{(p-1) / (q-1)}$ satisfies
$$
\Mm (D^2v) +v^q\leq 0 \quad \hbox{ in }\ \Semi\, .
$$
Therefore one can define the exponents
$$
\begin{array}{c}
p^* = \inf \{ p>1\, :\, \hbox{ \reff{mmeq} has a positive  solution}\}\\[2ex]
p_* = \sup \{ p<1\, :\, \hbox{ \reff{mmeq} has a positive  solution}\}
\end{array}
$$
 and the Liouville property for inequality \reff{mmeq} certainly fails if either $p<p_*$ or $p>p^*$.

At this point let us recall that inequalities such as \reff{lin} and \reff{mmeq} have  been extensively treated and subjected to different generalizations in past and recent works. For linear operators and  inequalities posed  in the whole space or in exterior domains, we just mention  \cite{G} for supersolutions of \reff{lin},  \cite{KLS} for uniformly elliptic non constant coefficient inequalities  of the form 
\beq \label{linA}
{\rm tr} (A(x)D^2u) +u^p\leq 0
\eeq
 and \cite{BCDC} for inequalities involving the Heisenberg--Laplace operator.
 
  In the  fully nonlinear case, inequalities posed in the whole space or in exterior domains have been considered  for Pucci extremal operators in  \cite{CL} for $p\geq 0$ and in \cite{AS1} for $p<0$, in  \cite{CDC} for Pucci extremal operators plus  first order terms, in  \cite{AS, BD, FQ1} for more general classes of fully nonlinear operators and zero order terms, and in   \cite{FQ} for fully nonlinear integrodifferential operators.  We merely recall that when inequality \reff{mmeq} is considered in the whole space, then the critical exponents are 
  $$p_*=\left\{
  \begin{array}{ll}
  \dis -\infty & \hbox{if }\ \dis \lL(n-1)\geq 1\\
  \dis \frac{\lL (n-1)+1}{\lL (n-1)-1} &   \hbox{if }\ \dis \lL(n-1)< 1
  \end{array}\right.
  \quad \hbox{and }\quad p^*=\frac{\Ll (n-1)+1}{\Ll (n-1)-1}
  $$
   and the Liouville property holds if and only if   $p_*\leq p\leq p^*$. 
 \smallskip
 
Inequality \reff{lin} posed in an halfspace or in more general cone--like domains has  been studied in \cite{BCDN,  KLM, KLMS}, and recently revised in \cite{AS}. 
In particular, the arguments  used in \cite{AS} can be applied also to fully nonlinear principal parts, and this is, up to our knowledge, the only existing result for non divergence form differential inequalities posed in conical domains, including the linear case of \reff{linA}.

The results of \cite{AS} in particular relate 
 the critical exponents $p^*,\ p_*$ for \reff{mmeq} to the scaling exponents $\alpha^\pm$ of the homogeneous solutions of the homogeneous equation. 
Precisely, we recall that, in view of the results of \cite{Miller} and their recent extensions in \cite{ASS},  the extremal homogeneous equation
\beq \label{omo}
\Mm (D^2 \Phi)=0
\eeq
is known to have in any cone $\mathcal{C}_\sigma =\{ x\in \R^n\, :\ x_n> \sigma \, |x|\}$, with $-1<\sigma <1$, exactly two solutions, up to normalization,  of  the form
$$
\Phi_{\alpha_\sigma^{\pm}} (x)= |x|^{-\alpha_\sigma^{\pm}} \phi_{\alpha_\sigma^{\pm}}\left(  \frac{x_n}{|x|} \right)
$$
with $\alpha_\sigma^- <0 <\alpha_\sigma^+$ and $\phi_{\alpha_\sigma^{\pm}}$  $C^2$--functions defined on the interval $[\sigma , 1 ]$  satisfying $\phi_{\alpha_\sigma^{\pm}} ( \sigma )=0$ and $\phi_{\alpha_\sigma^{\pm}}(t )>0$ for $ \sigma <t\leq 1$.

By applying the proof of Theorem 5.1 in \cite{AS} with the functions $\Psi^{\pm}$ there replaced by $\Phi_{\alpha_\sigma^{\pm}}$, it follows that positive supersolutions in $\mathcal{C}_\sigma$ of \reff{mmeq} do not exist if and only if
$$
1+\frac{2}{\alpha_\sigma^-} \leq p \leq 1+\frac{2}{\alpha_\sigma^+}\, .
$$
Now, for the halfspace $\Semi = \mathcal{C}_0$ it is clear that $\alpha_0^-=-1$ (and $\phi_{\alpha_\sigma^-} (t) =t$). Therefore, in this case, if we set $\alpha_0^+=\alpha$,  then we have
\beq \label{p*}
p_*= -1 \hbox{ and }\   p^* =1+\frac{2}{\alpha}\, .
\eeq
On the other hand, the existence of the homogeneous solution $\Phi_{\alpha}$ is obtained in \cite{Miller} by means of an abstract existence and uniqueness result for nonlinear ODEs having singular monotone lower order terms, and in \cite{ASS} by using a topological argument which leads to a fixed point theorem in Banach spaces. In both cases, the exponent $\alpha$ is not or not sharply estimated from above, so that no specific lower bound for $p^*$ can be deduced.

By the comparison principles of Phragm\'en--Lindel\"of type given in \cite{ASS, Miller}, $\alpha$ can be estimated from above provided that an explicit  subsolution of \reff{omo} vanishing on $\partial \Semi \setminus \{0\}$ is known, as well as an homogeneous  supersolution of \reff{omo} vanishing for  $|x|\to \infty$ produces a lower bound for $\alpha$. In \cite{Miller}, only a supersolution
of \reff{omo} is exhibited, namely the function
$$
\hat \Phi = \frac{x_n}{|x|^{\Ll (n-1)+1}}\, .
$$
Note that the inequality $\Mm (D^2 \hat \Phi )\leq 0$ in $\Semi$ easily follows from the fact that $\Mm$ is superadditive and $\hat \Phi$ is, up to a negative constant, the partial derivative with respect to $x_n$ of a well known radial solution for $\Mm$ in $\R^n\setminus \{ 0\}$. The homogeneous supersolution $\hat \Phi$ gives the lower bound $\alpha \geq \Ll (n-1)$, wich in turn implies, by \reff{p*},
$$
p^*\leq \frac{ \Ll (n-1)+2}{\Ll (n-1)}\, .
$$
In other words, inequality \reff{mmeq} does admit  positive solutions for $p>\frac{ \Ll (n-1)+2}{\Ll (n-1)}$, $u(x)=\frac{x_n}{|x|^\beta}$ being an explicit supersolution of \reff{mmeq}  for $\frac{p+1}{p-1}< \beta < \Ll (n-1)+1$.

Therefore, in order to obtain a nonexistence statement as in Theorem \ref{Lio}, we have to determine an explicit  subsolution of \reff{omo} vanishing on $\partial \Semi \setminus \{0\}$.
This turns out to be a non trivial task, since   the standard separation of variables technique in polar representation  hardly applies to operator $\Mm$ . To appreciate the strongly nonlinear character of $\Mm$, note that, for $n=2$, equation \reff{omo} reads as
$$
\Delta v =\left( \sqrt{\Ll} -\sqrt{\lL}\right) \sqrt{- {\rm det}D^2v }\, .
$$
We will prove that the function
$$
\Phi(x)=\frac{x_n^{\Ll}}{|x|^{\Ll (n+1)-1}}
$$ 
actually is a  subsolution of \reff{omo}. Hence, we obtain  the upper bound
$$
\alpha \leq \Ll n -1
$$
and Theorem \ref{Lio} can be deduced as a  consequence of \reff{p*}. Note  that specific bounds for $\alpha$ are useful also when applying the extended comparison principles and the boundary singularity removability results given in \cite{ASS, Miller}, which require as assumptions growth conditions involving the exponent $\alpha$.

With  the subsolution $\Phi$ at hand, we can   bound from below not only the solution $\Phi_\alpha$, but all nonnegative supersolutions of \reff{omo}, and we  obtain a monotonicity property for supersolutions as in the classical three--circles Hadamard Theorem for superharmonic functions (see \cite{PW}). This will be performed in Section 2. 
Furthermore, we apply this monotonicity property in Section 3, where  we  provide  an alternative elementary proof of Theorem \ref{Lio} in the superlinear case $1\leq p \leq  \frac{\Ll n+1}{\Ll n-1}$.  Indeed, Theorem \ref{Lio} will be shown  to  follow  easily from our  nonlinear three--surfaces Hadamard theorem for $1\leq p < \frac{\Ll n+1}{\Ll n-1}$. In the limiting  case $p=\frac{\Ll n+1}{\Ll n-1}$ we will apply  a bootstrap argument: first, if $u$ satisfies \reff{mmeq}, then $u$ is   a supersolution of \reff{omo}, and then $u\geq c\, \Phi$ for some constant $c>0$ and in a suitable subdomain of $\Semi$. Therefore, by  \reff{mmeq} with $p=\frac{\Ll n+1}{\Ll n-1}$, we will have  that
$$
-\Mm (D^2 u)\geq c\, \left( \frac{x_n^{\Ll}}{|x|^{\Ll (n+1)+1}} \right)^{\frac{\Ll n+1}{\Ll n-1}}\, .
$$
Again, we will construct an explicit solution of the opposite inequality,   and the comparison principle will show that $u$ is too large to satisfy \reff{mmeq}.

\section{ Explicit subsolutions of $\Mm (D^2u)=0$ and  an Hadamard type theorem }

In this section we first of all construct an explicit   homogeneous  subsolution of the homogeneous equation $\Mm (D^2u)=0$ in the halfspace $\Semi$,  vanishing on $\partial \Semi \setminus \{0\}$. This will be then used to get information on solutions and supersolutions as well.
 
 We will make use of the following algebraic  result, whose proof is just  a straightforward computation.

\begin{lemma}\label{eigenvalues}
Let $v,\ w\in \Rn$ be unitary vectors and, given $a,  b, c, d\in \R$, let us consider the symmetric matrix
$$
A = a\,  v\otimes v + b \, w\otimes w + c \, (v\otimes w +w\otimes v) +d\, I_n\, ,
$$
where $v\otimes w$ denotes the $n\times n$ matrix whose $i, j$-entry is $v_i w_j$. Then, the eigenvalues of $A$ are:
\begin{itemize}

\item $d$, with multiplicity (at least) $n-2$ and eigenspace given by $<v, w>^{\bot}$;
\smallskip

\item $\displaystyle  d+\frac{a+b+2cv\cdot w\pm \sqrt{(a+b+2cv\cdot w)^2+4(1-(v\cdot w)^2)(c^2-ab)}}{2}$, which are simple (if different from $d$).
\end{itemize}
In particular, if either $c^2=ab$ or $(v\cdot w)^2=1$, then the eigenvalues are $d$, which has multiplicity $n-1$,  and $d+a+b+2cv\cdot w$, which is simple.
\end{lemma}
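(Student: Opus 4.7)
The plan is to exploit the fact that $A - dI_n$ has rank at most $2$, being a linear combination of rank-one matrices built from $v$ and $w$ only. I would first observe that if $\xi \in \langle v,w\rangle^\perp$, then each of $(v\otimes v)\xi$, $(w\otimes w)\xi$, $(v\otimes w)\xi$, $(w\otimes v)\xi$ vanishes (using the identity $(x\otimes y)z = (y\cdot z)\,x$), so $A\xi = d\xi$. This immediately accounts for the eigenvalue $d$ with multiplicity at least $\dim\langle v,w\rangle^\perp$, which is $n-2$ when $v,w$ are linearly independent and $n-1$ when $v\parallel w$.

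Assuming first that $v,w$ are linearly independent, the subspace $\langle v,w\rangle$ is $A$-invariant, and a direct application of the same tensor-product identity yields
\[
Av = (d+a+cs)\,v + (c+bs)\,w,\qquad Aw = (c+as)\,v + (d+b+cs)\,w,
\]
where $s := v\cdot w$. Hence, in the (non-orthonormal) basis $\{v,w\}$, the restriction of $A-dI_n$ to $\langle v,w\rangle$ is represented by a $2\times 2$ matrix with trace $a+b+2cs$ and determinant
\[
(a+cs)(b+cs)-(as+c)(bs+c) = (ab-c^2)(1-s^2),
\]
the linear-in-$s$ cross terms cancelling in pairs.

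The two eigenvalues of this restriction are then produced by the quadratic formula as
\[
\frac{a+b+2cs \pm \sqrt{(a+b+2cs)^2 + 4(1-s^2)(c^2-ab)}}{2},
\]
and shifting by $d$ reproduces exactly the two simple eigenvalues in the statement. In the degenerate cases $c^2 = ab$ or $s^2 = 1$ the discriminant collapses to $(a+b+2cs)^2$ and the two roots become $0$ and $a+b+2cs$; thus $d$ gains one additional eigenvector inside $\langle v,w\rangle$ (consistently with the case $s^2=1$, where $\langle v,w\rangle^\perp$ is automatically $(n-1)$-dimensional, and a direct one-dimensional check on $v$ gives $Av = (d+a+b+2cs)\,v$).

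No real obstacle is expected, as the problem reduces to diagonalising a $2\times 2$ matrix once the orthogonal complement has been dispatched; the only slight care required is in expanding the determinant so that the linear-in-$s$ terms cancel to produce the clean factorisation $(ab-c^2)(1-s^2)$, which is what forces the discriminant to acquire the geometric factor $1-(v\cdot w)^2$ appearing in the statement.
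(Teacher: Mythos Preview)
Your proposal is correct and is exactly the ``straightforward computation'' the paper alludes to without giving any details. The identity $(x\otimes y)\xi = (y\cdot\xi)\,x$ immediately disposes of $\langle v,w\rangle^{\perp}$, and your trace/determinant computation for the $2\times 2$ block is accurate (the cancellation $(a+cs)(b+cs)-(c+as)(c+bs)=(ab-c^2)(1-s^2)$ is the only place one might slip, and you handle it cleanly); since the characteristic polynomial of a linear map is basis-independent, working in the non-orthonormal basis $\{v,w\}$ is perfectly legitimate.
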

\smallskip

\begin{remark} {\rm Let us explicitely remark that the radicand appearing in the expression of the eigenvalues above is nonnegative, since 
$$
\begin{array}{l}
\dis (a+b+2cv\cdot w)^2+4(1-(v\cdot w)^2)(c^2-ab)\\[2ex]
\dis = (a-b)^2+4c v\cdot w (a+b) +4c^2 +4 (v\cdot w)^2ab\\[2ex]
\dis \geq \left( v\cdot w (a-b)\right)^2+4c v\cdot w (a+b) +4c^2 +4 (v\cdot w)^2ab\\[2ex]
=\left( v\cdot w (a+b)+2c\right)^2\geq 0
\end{array}
$$
}
\end{remark}
\smallskip

\begin{theorem}\label{solfsemi}
For any fixed $\Lambda \geq \lambda >0$, the function
\beq \label{fisemi}
\dis \Phi (x)=\frac{x_n^{\frac{\Lambda}{\lambda}}}{|x|^{\frac{\Lambda}{\lambda}(n+1)-1}}
\eeq
satisfies, in the classical sense,
\beq \label{m-semi}
 \Mm(D^2\Phi)\geq 0\qquad  in\ \ \Semi\, .
\eeq
\end{theorem}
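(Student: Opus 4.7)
The plan is to compute $D^2\Phi$, identify its eigenvalues through Lemma~\ref{eigenvalues}, and then verify pointwise that the weighted sum defining $\Mm(D^2\Phi)$ is nonnegative. The specific calibration $\gamma:=\Ll$ paired with $\alpha:=\gamma(n+1)-1$ is precisely what will make the final algebraic inequality reduce to a manifestly nonnegative expression.

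Since $\Phi=x_n^\gamma |x|^{-\alpha}$ depends on $x$ only through $x_n$ and $|x|$, two differentiations produce
$$
D^2\Phi=a\,e_n\otimes e_n+b\,\hat x\otimes\hat x+c(e_n\otimes\hat x+\hat x\otimes e_n)+d\,I_n,
$$
with $\hat x=x/|x|$, $a=\gamma(\gamma-1)x_n^{\gamma-2}|x|^{-\alpha}$, $b=\alpha(\alpha+2)x_n^\gamma|x|^{-\alpha-2}$, $c=-\alpha\gamma x_n^{\gamma-1}|x|^{-\alpha-1}$, $d=-\alpha x_n^\gamma|x|^{-\alpha-2}$. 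Lemma~\ref{eigenvalues} then gives the full spectrum of $D^2\Phi$: the eigenvalue $d$ with multiplicity $n-2$, and two simple eigenvalues $\mu_\pm=d+\lambda_\pm$, where $\lambda_\pm=\tfrac12(A\pm\sqrt\Delta)$, $A=a+b+2c(x_n/|x|)$, $\Delta=A^2+4(1-(x_n/|x|)^2)(c^2-ab)$. A short computation shows $d<0$, $c^2-ab=\alpha\gamma[\gamma(n-1)+1]\,x_n^{2\gamma-2}|x|^{-2\alpha-2}>0$ (so that $\lambda_-<0<\lambda_+$ and $\mu_-<d<0$), and ${\rm tr}(D^2\Phi)=(\gamma-1)x_n^{\gamma-2}|x|^{-\alpha-2}[\gamma|x|^2+\alpha(n-1)x_n^2]\ge 0$, which combined with $d,\mu_-<0$ forces $\mu_+>0$. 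Consequently
$$
\Mm(D^2\Phi)=\lambda\mu_++\Lambda\mu_-+(n-2)\Lambda d.
$$

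Substituting $\Lambda=\gamma\lambda$ and setting $m:=(n-1)\gamma+1$, the claim $\Mm(D^2\Phi)\ge 0$ is equivalent to $(\gamma+1)A+2dm\ge(\gamma-1)\sqrt\Delta$. A direct rearrangement yields $(\gamma+1)A+2dm=(\gamma-1)x_n^{\gamma-2}|x|^{-\alpha-2}[\gamma(\gamma+1)|x|^2+\alpha m\,x_n^2]\ge 0$, so squaring is legitimate. Using the algebraic identity $\gamma A^2+(\gamma+1)Adm+d^2m^2=(\gamma A+dm)(A+dm)$ together with the explicit forms $A+dm=\gamma(\gamma-1)x_n^{\gamma-2}|x|^{-\alpha}$ and $\gamma A+dm=(\gamma-1)x_n^{\gamma-2}|x|^{-\alpha-2}[\gamma^2|x|^2+\alpha m\,x_n^2]$, the squared inequality collapses to
$$
\gamma(\gamma-1)^2\,x_n^{2\gamma-4}|x|^{-2\alpha-4}\bigl[\gamma^2|x|^4+\alpha m\,x_n^4\bigr]\ge 0,
$$
which is manifest.

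The main obstacle is this last algebraic reduction: the factorization $\gamma A^2+(\gamma+1)Adm+d^2m^2=(\gamma A+dm)(A+dm)$ into two nonnegative factors, and the cancellation $\gamma^2+\alpha m\,t^2-(1-t^2)\alpha m\,t^2=\gamma^2+\alpha m\,t^4$ (with $t=x_n/|x|$), are precisely what the exponent $\gamma=\Ll$ is engineered to produce; a different exponent would leave a residual in the squared inequality that becomes negative as $x_n/|x|\to 0^+$, i.e.\ near $\partial\Semi$. The determination that $\mu_+\ge 0$ always holds ensures that Pucci's weighting is unambiguous, so no case split on the sign of $\mu_+$ is needed.
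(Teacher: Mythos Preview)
Your proof is correct and follows essentially the same route as the paper: compute $D^2\Phi$, apply Lemma~\ref{eigenvalues} to extract the eigenvalues, determine that exactly one eigenvalue is nonnegative, and reduce $\Mm(D^2\Phi)\ge 0$ to an algebraic inequality of the form $(\text{nonnegative quantity})\ge (\gamma-1)\sqrt{\Delta}$. The only difference lies in how that last inequality is verified. You square both sides and exploit the factorization $\gamma A^2+(\gamma+1)Adm+d^2m^2=(\gamma A+dm)(A+dm)$, which is tailored to the specific exponents and yields the inequality exactly. The paper instead bounds the discriminant from above by a perfect square (using $1-(x_n/|x|)^2\le 1$), obtaining for \emph{general} $\alpha,\beta$ with $\beta\ge\alpha$ the lower bound
\[
\Mm(D^2\Phi)\ge \lambda\,\frac{x_n^\alpha}{|x|^{\beta+2}}\Bigl[\beta\bigl(\beta-2\alpha-\tfrac{\Lambda}{\lambda}(n-1)+1\bigr)+\alpha\bigl(\alpha-\tfrac{\Lambda}{\lambda}\bigr)\bigl(\tfrac{|x|}{x_n}\bigr)^2\Bigr],
\]
which vanishes for the chosen exponents. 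This general estimate is not merely cosmetic: the paper reuses it (with $\alpha=\tfrac{\Lambda}{\lambda}+2$, $\beta=\tfrac{\Lambda}{\lambda}(n+1)+1$) in the proof of Lemma~\ref{Gammasottosol}, so the paper's variant has the advantage of producing a reusable inequality.
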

\begin{proof}
Let us set $\ro =|x|$, and let us compute the hessian matrix for functions of the form
$$
\Phi (x)=\frac{x_n^\alpha}{\ro^\beta}
$$
for any  $\alpha ,\ \beta >0$. One has
$$\dis
D^2\Phi =\frac{x_n^\alpha}{\ro^{\beta+2}} \left[ \beta (\beta+2) \frac{x}{\ro}\otimes \frac{x}{\ro} + \alpha (\alpha -1) \left( \frac{\ro}{x_n}\right)^2 e_n\otimes e_n -\alpha \beta \frac{\ro}{x_n} \left( \frac{x}{\ro}\otimes e_n +e_n \otimes \frac{x}{\ro}\right) -\beta I_n\right]
$$
with $e_n=(0,1)\in \Rn$.

According to Lemma \ref{eigenvalues}, the eigenvalues $\mu_1, \ldots ,\mu_n$ of $D^2\Phi (x)$ are
\beq \label{mui}
\begin{array}{c}
\dis \mu_1 = \frac{x_n^\alpha}{\ro^{\beta+2}} \frac{\beta (\beta -2\alpha )+\alpha (\alpha -1)(\ro /x_n)^2 +\sqrt{\mathcal{D}}}{2}\\[2ex]
\dis \mu_2 = \frac{x_n^\alpha}{\ro^{\beta+2}} \frac{\beta (\beta -2\alpha )+\alpha (\alpha -1)(\ro /x_n)^2 -\sqrt{\mathcal{D}}}{2}\\[2ex]
\dis \mu_i = -\beta \frac{x_n^\alpha}{\ro^{\beta+2}} \qquad 3\leq i\leq n
\end{array}
\eeq
with
$$
\mathcal{D}=\left( \beta (\beta -2\alpha +2)+\alpha (\alpha -1)\left( \frac{\ro}{x_n}\right)^2\right)^2 +4 \alpha \beta (\beta -2\alpha +2)\left( \frac{\ro}{x_n}\right)^2 \left( 1- \left( \frac{x_n}{\ro}\right)^2\right)\, .
$$
We notice that
$$
\mathcal{D} =\left( \beta (\beta -2\alpha)+\alpha (\alpha -1)\left( \frac{\ro}{x_n}\right)^2\right)^2 +4 \beta (\beta -\alpha +1) \left( \beta -2\alpha +\alpha \left( \frac{\rho}{x_n}\right)^2\right)\, ;
$$
therefore, for $\beta \geq \alpha$, one has $\mu_1\geq 0$ and $\mu_i\leq 0$ for $2\leq i\leq n$. Hence
\beq\label{Mmfi}
\begin{array}{ll}
\dis \Mm (D^2\Phi) &  \dis \!\!\!\!  = \lambda\,  \mu_1 +\Lambda \sum_{i=2}^n \mu_i\\[3ex]
 & \dis  \!\!\!\! = \lambda \frac{x_n^\alpha}{\ro^{\beta+2}} \left[ \beta \left( \frac{1}{2}\left(  \Ll +1\right) (\beta -2\alpha )-\Ll (n-2)\right) \right. \\[3ex]
 & \dis \left. + \frac{\alpha}{2}(\alpha -1)\left( \Ll +1\right) \left( \frac{\ro}{x_n}\right)^2 -\frac{1}{2} \left( \Ll -1\right) \sqrt{\mathcal{D}}\right]\, .
 \end{array}
 \eeq
Furthermore,  the radicand $\mathcal{D}$  can be easily estimated as follows
 $$
 \begin{array}{ll}
 \dis \mathcal{D}  & \dis \leq \left( \beta (\beta -2\alpha +2)+\alpha (\alpha -1)\left( \frac{\ro}{x_n}\right)^2\right)^2 +4 \alpha \beta (\beta -2\alpha +2)\left( \frac{\ro}{x_n}\right)^2 \\[2ex]
 & \dis =  \beta^2 (\beta -2\alpha +2)^2+\alpha^2 (\alpha -1)^2\left( \frac{\ro}{x_n}\right)^4 +2 \alpha \beta (\alpha +1)(\beta -2\alpha +2)\left( \frac{\ro}{x_n}\right)^2\\[2ex]
 & \dis \leq \left( \beta (\beta -2\alpha +2)+\alpha (\alpha +1)\left( \frac{\ro}{x_n}\right)^2\right)^2 \, . 
\end{array}
$$
Inserting the above inequality into \reff{Mmfi} then yields
\beq \label{ineq1}
\Mm (D^2\Phi )\geq  \lambda \frac{x_n^\alpha}{\ro^{\beta+2}} \left[ \beta \left( \beta -2\alpha -\Ll (n-1)+1\right) +\alpha \left( \alpha -\Ll\right) \left( \frac{\ro}{x_n}\right)^2\right]\, .
\eeq
The choices $\alpha =\Ll\geq 1$ and $\beta =\Ll(n-1)+2 \alpha -1=\Ll (n+1)-1\geq \alpha$ then give \reff{m-semi}.
 
 \end{proof}

\begin{remark} {\rm   Let us point out that for $\Lambda =\lambda$ the  functions $\Phi$   coincides  with the harmonic function $\frac{x_n}{|x|^n}$, and equality holds in \reff{m-semi}. 
For $\Lambda >\lambda$, different choices for the exponents $\beta \geq \alpha >0$ are possible to make $\Phi (x)=\frac{x_n^\alpha}{|x|^\beta}$ a solution of \reff{m-semi}. Indeed, from the above proof   it follows that $\Phi$ satisfies \reff{m-semi} if and only if the following inequality holds true
\beq \label{dist}
\begin{array}{l}
\dis \beta \left( \left( \Ll +1\right) (\beta -2 \alpha )-2 \Ll (n-2) \right) t + \alpha (\alpha -1 )\left( \Ll +1\right)\\[2ex]
\dis \geq \left( \Ll -1\right) \sqrt{ \beta (\beta +2) (\beta -2 \alpha ) (\beta -2 \alpha +2) t^2 +\alpha ^2 (\alpha -1)^2 + 2 \alpha \beta (\alpha +1) (\beta -2 \alpha +2) t}\\
\hbox {for all }\ t\in [0,1]\quad \left( t=\left( \frac{x_n}{|x|}\right)^2 \right)\, .
\end{array}
\eeq
First,   we note that  testing \reff{dist} for $t=0$ yields $\alpha >1$. Then, we observe that \reff{dist} is satisfied also by $\beta= 2 \alpha$, $\alpha =2 \Ll (n-1)-1$. However, the smaller scaling exponent $\beta -\alpha =\Ll n-1$  selected in Theorem \ref{solfsemi} will produce better estimates.}
\end{remark}

\begin{remark}\label{super} {\rm As far as supersolutions for operator $\Mm$ are concerned, it is easy to prove that the function, already found in \cite{Miller},
$$
\hat{\Phi} (x) = \frac{x_n}{|x|^{\Ll (n-1)+1}}
$$
satisfies, in the classical sense,
$$
 \Mm (D^2\hat{\Phi})\leq 0\qquad  \hbox{in }\ \ \Semi\, .
$$
This can be  checked either directly, by using formulas  \reff{mui}, or by oserving that  $\Mm$ is superadditive and  $\hat{\Phi}$ is, up to a negative constant,  the derivative with respect to $x_n$ of  the well known radial solution for $\Mm$
\beq \label{radial}
\phi (x)=
\left\{
\begin{array}{ll}
\dis-\log |x| & \qquad
\mbox{if} \  \beta =2 \\[2ex]
\dis |x|^{2-\beta }& \qquad 
\mbox{if} \   \beta > 2  \\
\end{array}
\right.
\eeq
with $\beta =\Ll (n-1)+1$.
}\end{remark}

 The subsolution $\Phi$ given in Theorem \ref{solfsemi} can be used to estimate solutions and supersolutions by means of extended comparison principles of Phragm\'em--Lindel\"of type, such as the ones given in \cite{ ASS, Miller}. We present here another form of comparison principle, namely a nonlinear  three--surfaces version of the classical Hadamard three--circles theorem. Let us recall, see e.g. \cite{PW},  that this classical   result provides  a decay estimate at infinity for entire nonnegative superharmonic functions. More precisely, by comparing a nonnegative  function $u$ superharmonic in $\Rn$ with the fundamental solution, one has that  the function 
$m(r)=\inf_{B_r} u$ satisfies the concavity inequality
$$
\dis
m(r)\geq \left\{
\begin{array}{ll}
\dis \frac{m(r_2)\log (r_1/r)+m(r_1)\log (r/r_2)}{\log (r_1/r_2)}& \qquad
\mbox{if} \ n =2 \\[2ex]
\dis \frac{m(r_2)\left( r^{2-n }-r_1^{2-n}\right) +
m(r_1)\left( r_2^{2-n }-r^{2-n }\right)}
{\left( r_2^{2-n }-r_1^{2-n}\right)} & \qquad 
\mbox{if} \  n > 2\, . \\
\end{array}
\right.
$$
for every fixed $r_1>r_2>0$ and for all $r_2\leq r\leq r_1$. This immediately yields that $u$ is constant if $n=2$ (Liouville Theorem), and that $r\in (0,+\infty)\mapsto r^{n-2}m(r)$ is nondecreasing if $n\geq 3$.

The same argument can be used in the  fully nonlinear framework ,  see  \cite{CL}, where it has been proved that if $u$ is a bounded from below solution of $\Mm (D^2u)\leq 0$ in $\Rn$, then the infimum function $m(r)$ satisfies
\beq \label{hadam1}
m(r)\geq \left\{
\begin{array}{ll}
\dis \frac{m(r_2)\log (r_1/r)+m(r_1)\log (r/r_2)}{\log (r_1/r_2)}& \qquad
\mbox{if} \  \beta =2 \\[2ex]
\dis \frac{m(r_2)\left( r^{2-\beta }-r_1^{2-\beta}\right) +
m(r_1)\left( r_2^{2-\beta }-r^{2-\beta }\right)}
{\left( r_2^{2-\beta }-r_1^{2-\beta }\right)} & \qquad 
\mbox{if} \   \beta > 2\, . \\
\end{array}
\right.
\eeq
with $\beta =\frac{\Lambda}{\lambda}(n-1)+1$. This has been accomplished by comparing $u$ in annular domains with the new "fundamental solution", that is the radial solution of 
 $\Mm (D^2\phi)=0$ in $\Rn\setminus \{0\}$ given by \reff{radial}.

In order to obtain analogous results in $\Semi$, we have to consider suitable subdomains (suggested by the subsolution $\Phi$ of Theorem \ref{solfsemi}) where the comparison principle can be applied. 
 For $ x\in \Semi$, let us  define the positive function
 \beq \label{di}
 d=d(x)= \left( \frac{|x|}{x_n}\right)^k |x|\, ,
 \eeq
 with
$$
k=\frac{\Lambda -\lambda}{\Lambda \, n}\, ,
$$
 and  observe that $\Phi$ can be written as
 $$
 \Phi (x)= \frac{x_n}{d^{\Ll n}}\, .
 $$
Let us also
introduce, for every $r>0$, the sub--level sets
\beq \label{Br}
\mathcal{B}_r=\left\{ x\in \Semi \; \left| \, d (x)<r  \right. \right\} \, .
\eeq
We notice that, for $\Lambda =\lambda$, $d(x)$ reduces to  $|x|$ and the set $\mathcal{B}_r$ is nothing but the upper halfball $B^+_r= B_r\cap \Semi$.
 In the case
$\Lambda >\lambda$, $\mathcal{B}_r$ is an open subset of $B^+_r$, being $d(x)\geq |x|$. It is rotationally symmetric around the $x_n$--axis  and it satisfies
\beq \label{front}
\partial \mathcal{B}_r=\left\{ x\in \Semi  \; \left| \, d =r  \right.
\right\} \cup \left\{ (0,0)\right\}\, ,\quad \partial \mathcal{B}_r \cap \partial B^+_r=\left\{ (0,0),\ (0,r)\right\} \, .
\eeq
 
Let us consider now   a  lower semicontinuous function $u:\overline{\Semi}\to [0,+\infty ]$  satisfying in the viscosity sense
\beq \label{Mmsemi}
u\geq 0\,,\ \Mm (D^2u)\leq 0 \ \hbox{ in }\ \Semi\, .
\eeq
 By the strong maximum principle, if $u$ does not vanish identically then it is strictly positive in $\Semi$. Therefore, by   translating upward the domain if necessary, we can  assume that $u$ is   strictly positive on the closure  $\overline{\Semi}$. For positive $r$ let us define  the function
\beq \label{mu}
\dis \mu (r)=\inf_{x\in \mathcal{B}_r}\frac{u(x)}{x_n}\, .
 \eeq
Some immediate properties of  $\mu (r)$ are summarized in the following Lemma.

\begin{lemma}\label{mupro} Let $u$ be a positive lower semicontinuous function in $\overline \Semi$ satisfying \reff{Mmsemi},   and let
$\mu (r)$ be  defined by \reff{mu}. Then, for every $r>0$, there exists
a point $\hat x\in \partial \mathcal{B}_r\cap \Semi$ such that 
$$
\mu  (r)=\frac{u(\hat x)}{{\hat x}_n}\, .
$$
In particular,  $\mu  (r)$ is a positive and
decreasing function of $r\in (0 ,+\infty )$.
\end{lemma}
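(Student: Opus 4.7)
The plan is to establish the three claims in turn: attainment of the infimum at a point of $\partial \mathcal{B}_r \cap \Semi$, positivity of $\mu(r)$, and monotonicity in $r$.

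First, since $d(x)\geq |x|$, the set $\mathcal{B}_r$ is contained in the bounded halfball $B_r^+$; moreover, by \reff{front}, $\overline{\mathcal{B}_r}\cap \partial \Semi = \{0\}$. The map $x \mapsto u(x)/x_n$ is lower semicontinuous on $\mathcal{B}_r$, and the assumed strict positivity of $u$ on $\overline{\Semi}$, combined with lower semicontinuity at the origin (so that $\liminf_{x\to 0} u(x)\geq u(0)>0$), forces $u(x)/x_n\to +\infty$ as $x\to 0$ within $\mathcal{B}_r$. Hence, for $\varepsilon>0$ sufficiently small, the infimum in \reff{mu} is attained on the relatively compact set $\overline{\mathcal{B}_r\setminus B_\varepsilon(0)}\subset\Semi$, and therefore at some point $\hat x\in \mathcal{B}_r\cup(\partial \mathcal{B}_r\cap \Semi)$.

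Next, I would argue that $\hat x$ may be chosen on $\partial \mathcal{B}_r \cap \Semi$. Set $w(x)=u(x)-\mu(r)\,x_n$: it is lower semicontinuous, nonnegative on $\mathcal{B}_r$ by the very definition of $\mu(r)$, and, as the linear function $x_n$ has vanishing Hessian, $w$ satisfies $\Mm (D^2 w)\leq 0$ in the viscosity sense in $\Semi$. If the minimizer $\hat x$ were an interior point of $\mathcal{B}_r$, so that $w(\hat x)=0$, then the strong minimum principle for viscosity supersolutions of the Pucci operator $\Mm$ (see e.g.~\cite{cafca}) would yield $w\equiv 0$ in the connected component of $\mathcal{B}_r$ containing $\hat x$. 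But $\mathcal{B}_r$ is connected: each horizontal slice $\{x_n=t\}\cap \mathcal{B}_r$ (for $0<t<r$) is a nonempty disk centered on the $x_n$-axis, and the open segment $\{(0,x_n)\,:\,0<x_n<r\}$ lies in $\mathcal{B}_r$ (since $d(0,x_n)=x_n<r$) and intersects every such slice. Thus $w\equiv 0$ on $\mathcal{B}_r$, and by lower semicontinuity the equality $u(y)=\mu(r)\,y_n$ extends to every $y\in \partial \mathcal{B}_r\cap \Semi$, so that the infimum is attained on $\partial \mathcal{B}_r\cap \Semi$ in any case.

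Finally, positivity of $\mu(r)$ is immediate from attainment: at $\hat x\in \partial \mathcal{B}_r\cap \Semi$ one has $\hat x_n>0$ and $u(\hat x)>0$, whence $\mu(r)=u(\hat x)/\hat x_n>0$. Monotonicity follows at once from the inclusion $\mathcal{B}_{r_1}\subset \mathcal{B}_{r_2}$ for $r_1<r_2$, which forces $\mu(r_2)\leq \mu(r_1)$. I anticipate the main delicate point to be the correct invocation of the strong minimum principle for viscosity supersolutions of $\Mm$ applied to the merely lower semicontinuous $w$; this is standard in the Caffarelli--Cabr\'e theory, but it is the sole nontrivial ingredient in the argument.
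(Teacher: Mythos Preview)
Your argument follows the same path as the paper's: both introduce the auxiliary function $v_r(x)=u(x)-\mu(r)\,x_n$, note that it is nonnegative on $\mathcal{B}_r$ and satisfies $\Mm(D^2v_r)\leq 0$, and invoke the minimum principle to push the zero of $v_r$ onto $\partial\mathcal{B}_r\cap\Semi$. The attainment and positivity parts are therefore handled in essentially the same way (you are simply more explicit about the behaviour near the origin and about connectedness of $\mathcal{B}_r$ when invoking the strong minimum principle).

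There is, however, a small gap in your final step. From the inclusion $\mathcal{B}_{r_1}\subset\mathcal{B}_{r_2}$ you only deduce $\mu(r_2)\leq\mu(r_1)$, i.e.\ nonincreasing, while the lemma asserts that $\mu$ is \emph{decreasing}. The paper obtains the strict inequality by reusing the first part: for $R>r$, the function $v_R=u-\mu(R)\,x_n$ attains its minimum value $0$ only on $\partial\mathcal{B}_R\cap\Semi$; since $\partial\mathcal{B}_R\cap\partial\mathcal{B}_r\cap\Semi=\emptyset$, one has $v_R>0$ on $\overline{\mathcal{B}_r}\cap\Semi$, hence $u(x)/x_n>\mu(R)$ there and $\mu(r)>\mu(R)$. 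Your strong minimum principle argument actually gives you the needed ingredient---that the zero set of $v_R$ lies on $\partial\mathcal{B}_R\cap\Semi$ (the alternative $v_R\equiv 0$ on $\mathcal{B}_R$ being ruled out by lower semicontinuity and $u(0)>0$)---so the fix is immediate, but as written the monotonicity conclusion is weaker than stated.
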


\begin{proof}
The function $\frac{u(x)}{x_n}$ is positive and lower semicontinuous
in $\overline \Semi$, so that the infimum $\mu  (r)$   actually
is a minimum on $\overline{\mathcal{B}_r}$, attained at some point belonging to
$\overline{\mathcal{B} _r}\cap \Semi$. Let us consider the function
$$
v_r(x)=u(x)-\mu  (r ) x_n\, ,
$$
which is nonnegative in $\overline {\mathcal{B}_r}$ and satisfies $\Mm (D^2v_r)\leq 0$ in $\mathcal{B}_r$. By the maximum principle the minimum of $v_r$ on $\overline {\mathcal{B}_r}$ is attained on $\partial  \mathcal{B}_r$. On the other hand, we have 
$\min_{\overline{\mathcal{B}_r}}v_r=0$ and $v_r=u>0$  for $x_n=0$,  so that from
\reff{front} the first part of the statement follows.
 
 Observing further that, for every $R>r>0$, one has $
\partial \mathcal{B}_R \cap \partial \mathcal{B}_r \cap \Semi
=\emptyset $, 
from the above it follows that $v_R(x)>0$ in $\overline{ \mathcal{B}_r}$, that is
$$
\frac{u(x)}{x_n} >\mu  (R)\qquad \forall \, x\in \overline{\mathcal{B}_r}\cap \Semi
 $$
and the claim is completely proved.

\end{proof}
 
We can now prove our nonlinear Hadamard type theorem.

\begin{theorem} \label{hadamsemith}Let $u: \overline \Semi \to [0,+\infty ]$ be a lower semicontinuous function satisfying  \reff{Mmsemi}. Then  the function $\mu (r)$ defined by
\reff{mu}   is a concave function of $r^{-\Ll n}$,
i.e. for every fixed $R>r>0$ and for all $r\leq \rho \leq R$ one has
\beq \label{hadsemi}
\dis \mu  (\rho )\geq \frac{\mu (r) \left( \rho^{-\Ll n} -R^{-\Ll n}\right) 
+\mu(R) \left( r^{-\Ll n} -\rho^{-\Ll n}\right)}
{r ^{-\Ll n} -R^{-\Ll n} }.
\eeq
Consequently, we have that
\beq \label{monosemi}
r\in (0,+\infty )\mapsto \mu  (r)\, r^{\Ll n} \quad is\
nondecreasing .
\eeq
\end{theorem}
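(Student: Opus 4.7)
The plan is to mimic the classical Hadamard three--circles argument, with the fundamental radial solution in $\Rn$ replaced by the explicit halfspace subsolution $\Phi$ from Theorem \ref{solfsemi}, and to use the linear function $x_n$ (which trivially satisfies $\Mm(D^2 x_n)=0$) to build a two--parameter family of comparison functions. I fix $R>r>0$ and work in the bounded annular region $\Omega_{r,R}:=\mathcal{B}_R\setminus \overline{\mathcal{B}_r}$.

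I introduce the candidate barrier
$$
w(x) = A\, x_n + B\, \Phi(x) = \left(A + \frac{B}{d(x)^{\Ll n}}\right) x_n
$$
and choose the constants $A$ and $B$ so that $w=\mu(r)\, x_n$ on $\{d=r\}$ and $w=\mu(R)\, x_n$ on $\{d=R\}$. Solving the resulting linear system gives
$$
B = \frac{\mu(r)-\mu(R)}{r^{-\Ll n}-R^{-\Ll n}}\ge 0,
$$
with nonnegativity following from the monotonicity of $\mu$ proved in Lemma \ref{mupro}, and then $A=\mu(r)-B\, r^{-\Ll n}$. Since $x_n$ has vanishing Hessian and $B\ge 0$, I obtain $\Mm(D^2 w)=B\,\Mm(D^2\Phi)\ge 0$ in $\Semi$ by Theorem \ref{solfsemi}, so $w$ is a classical subsolution of $\Mm(D^2 w)\ge 0$.

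I next apply the viscosity comparison principle to $u$ and $w$ in $\Omega_{r,R}$. On $\partial \mathcal{B}_r \cap \Semi$ one has $w=\mu(r)\, x_n\le u$, and on $\partial \mathcal{B}_R \cap \Semi$ one has $w=\mu(R)\, x_n\le u$, both by the very definition of $\mu$. By \reff{front}, the only remaining boundary point of $\Omega_{r,R}$ is the origin, where $w(0)=0<u(0)$ thanks to the strict positivity of $u$ on $\overline{\Semi}$ noted before Lemma \ref{mupro}. Thus $u\ge w$ on $\partial\Omega_{r,R}$, and comparison yields $u\ge w$ throughout $\Omega_{r,R}$.

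To conclude, for each $\rho\in[r,R]$ I pick the point $\hat x\in\partial \mathcal{B}_\rho\cap\Semi$ produced by Lemma \ref{mupro}, where $d(\hat x)=\rho$ and $\mu(\rho)=u(\hat x)/\hat x_n$. Plugging $\hat x$ into the inequality $u\ge w$, dividing by $\hat x_n$, and substituting the explicit values of $A$ and $B$ yields exactly \reff{hadsemi}. The monotonicity \reff{monosemi} follows by fixing $r<\rho$, dropping from \reff{hadsemi} the nonnegative term $\mu(R)(r^{-\Ll n}-\rho^{-\Ll n})$ (nonnegative since $r<\rho$ and $\mu(R)>0$), and letting $R\to +\infty$ to arrive at $\mu(\rho)\, \rho^{\Ll n}\ge \mu(r)\, r^{\Ll n}$. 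The main delicate step I expect is the application of the comparison principle across the corner at the origin where the two boundary components meet, but the strict positivity of $u$ on $\overline{\Semi}$ combined with the vanishing of $w$ there should make that step straightforward.
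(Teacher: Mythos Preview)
Your proof is correct and follows essentially the same route as the paper's: both construct the comparison function $x_n\bigl(c_1\, d(x)^{-\Ll n}+c_2\bigr)$ (your $A x_n+B\Phi$), verify it is a subsolution via Theorem \ref{solfsemi} and the vanishing of $D^2 x_n$, apply the comparison principle on $\mathcal{B}_R\setminus\overline{\mathcal{B}_r}$ using the boundary data supplied by the definition of $\mu$, and then read off \reff{hadsemi} at the point $\hat x$ furnished by Lemma \ref{mupro}. Your treatment of the corner at the origin and the justification that $B\ge 0$ are slightly more explicit than in the paper, but the arguments are otherwise identical.
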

\begin{proof}
We fix $R>r>0$ and we apply the comparison principle  in the domain
$\mathcal{B}_R \setminus \mathcal{B}_r$, where we consider the function
$$
\dis \Phi (x)= x_n \left( c_1 d  (x)^{-\frac{\Lambda}{\lambda}n}+c_2\right)   \, ,
$$
with constants $c_1\geq 0$ and  $c_2\in \R$ to be appropriately fixed. Notice that $\Phi$ has a
continuous extension in $\overline{\mathcal{B}_R \setminus \mathcal{B}_r}$ vanishing  at the origin. By Theorem \ref{solfsemi}, 
we have in particular
$$
\mathcal{M}^-_{\lambda ,\Lambda}(D^2\Phi )\geq 0\qquad \mbox{in }\ \mathcal{B}_R \setminus \mathcal{B}_r\, .
$$
 Let us now fix the constants $c_1\geq 0$ and $c_2\in \R$ in such a way that $\Phi \leq u$ on $\partial (\mathcal{B}_R \setminus \mathcal{B}_r)$. We impose
$$
\left\{
\begin{array}{l}
\dis c_1 r^{-\Ll n}+c_2 = \mu (r)\\[2ex]
\dis  c_1 R^{-\Ll n}+c_2 = \mu (R)
\end{array} \right. 
$$
which yields
$$
\left\{ \begin{array}{l}
\dis c_1 = \frac{\mu (r) -\mu (R)}{r^{-\Ll n}-R^{-\Ll n}}\geq 0\\[2ex]
\dis c_2= \frac{ \mu (R) r^{-\Ll n} - \mu(r) R^{-\Ll n}}{r^{-\Ll n}-R^{-\Ll n}}
 \end{array} \right.
$$
With this choice of $c_1$ and $c_2$ we can apply the comparison principle to   the subsolution $\Phi$ and to the supersolution $u$ in the domain 
$\mathcal{B}_R \setminus \mathcal{B}_r$,  which gives $\Phi \leq u$, that is 
$$
 \frac{u(x)}{x_n}\geq \frac{ \mu (r) \left( d(x)^{-\Ll n}-R^{-\Ll n}\right) + \mu (R) \left( r^{-\Ll n}- d(x)^{-\Ll n}\right)}{r^{-\Ll n}-R^{-\Ll n}}\, .
 $$
By Lemma \ref{mupro}, for every $r\leq \rho \leq R$ there exists a point $\hat x$ such that
$d (\hat x)=\rho$ and $\mu  (\rho )=\frac{u(\hat x)}{{\hat x}_n}$;
by applying the above inequality for $x=\hat x$, we then obtain  \reff{hadsemi}. 

By observing further that \reff{hadsemi} implies 
$$
\dis \mu (\rho )\geq \frac{\mu  (r)\left( 
\rho^{-\frac{\Lambda}{\lambda}n} -R^{-\frac{\Lambda}{\lambda}n}\right)} 
{r^{-\Ll n}-R^{-\Ll n}}
$$
and by letting $R\to +\infty$, we finally get the monotonicity property 
 \reff{monosemi}.
 
\end{proof}

As a consequence of Theorem \ref{hadamsemith} we  can obtain more specific bounds on the scaling exponent of the positive  singular homogeneous solution, which is already known to exist. By a  singular  homogeneous  function   we mean a positively  homogeneous function with  negative homogeneity exponent. We recall that, by the results of \cite{Miller} and their extensions in \cite{ASS}, it is known that there exists 
 a unique positive exponent $\alpha$,  and a unique $C^2$--function $\phi_\alpha :\left[ 0, \frac{\pi}{2}\right] \to \left[ 0, +\infty \right)$, with $\phi_\alpha (0)=0$, $\phi_\alpha (\theta)>0$ for $0< \theta \leq \frac{\pi}{2}$, such that  
\beq \label{fia}
\Phi_\alpha (x)= |x|^{-\alpha} \phi_\alpha \left( \arcsin \left( \frac{x_n}{|x|}\right) \right)
\eeq
 is the unique (up to normalization)   singular homogeneous  and   continuous in $\overline{\Semi}\setminus \{0\}$ solution of
$$
\Phi_\alpha >0\, ,\ \Mm (D^2 \Phi_\alpha )=0 \ \hbox{ in } \Semi\, ,\quad \Phi_\alpha =0 \ \hbox{ on } \partial \Semi \setminus \{ 0\}\, .
$$
Moreover, as observed in \cite{Miller}, a comparison argument  applied to $\Phi_\alpha$ and the supersolution $\hat{\Phi}$ given in Remark \ref{super}, yields
\beq\label{lbound}
\alpha \geq \Ll (n-1)\, .
\eeq
On the other hand, by Theorem \ref{hadamsemith} we immediately obtain the following upper bound.

\begin{corollary}\label{alfa} The scaling exponent of the solution $\Phi_\alpha$ in \reff{fia} satisfies
$$
\alpha\leq \Ll n -1\, .
$$
\end{corollary}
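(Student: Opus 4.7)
The plan is to apply the monotonicity statement \reff{monosemi} of Theorem \ref{hadamsemith} to $u = \Phi_\alpha$ itself, then to compute the function $\mu(r)$ explicitly using the fact that both $d(x)$ and the ratio $\Phi_\alpha(x)/x_n$ are positively homogeneous. The conclusion will then be immediate by comparing exponents of the resulting power of $r$.

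First I would check that Theorem \ref{hadamsemith} applies to $\Phi_\alpha$, which is a positive classical (hence viscosity) solution of $\Mm(D^2 u) \leq 0$ in $\Semi$. The only subtlety is that $\Phi_\alpha$ vanishes on $\partial \Semi \setminus \{0\}$, whereas Lemma \ref{mupro} was stated for functions strictly positive on $\overline{\Semi}$. However, since $d(x) \to +\infty$ as $x_n \to 0$ with $|x|$ bounded, the set $\overline{\mathcal{B}_\rho}$ meets $\partial \Semi$ only at the origin; moreover, from \reff{fia} and the fact that $\phi_\alpha(\theta)/\sin\theta$ is bounded below by a positive constant on $[0,\pi/2]$, the ratio $\Phi_\alpha(x)/x_n$ blows up as $x \to 0$ in $\Semi$. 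Hence the strong maximum principle argument used in Lemma \ref{mupro} still forces the infimum defining $\mu(\rho)$ to be attained at some $\hat x \in \{d = \rho\} \cap \Semi$, and Theorem \ref{hadamsemith} carries through verbatim.

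Next I would exploit scaling. Since $d(\tau x) = \tau\, d(x)$ for all $\tau > 0$, one has $\mathcal{B}_r = r\,\mathcal{B}_1$. Since $\Phi_\alpha$ is positively homogeneous of degree $-\alpha$, the ratio $\Phi_\alpha(x)/x_n$ is positively homogeneous of degree $-\alpha -1$. Substituting $x = r y$ in the definition of $\mu(r)$ yields
$$
\mu(r) = \inf_{y \in \mathcal{B}_1} \frac{\Phi_\alpha(ry)}{(ry)_n} = r^{-\alpha -1}\,\mu(1),
$$
with $\mu(1) > 0$ (the infimum is a positive minimum, by the previous paragraph).

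Finally, by \reff{monosemi}, the map $r \mapsto \mu(r)\, r^{\Ll n} = \mu(1)\, r^{\Ll n - \alpha - 1}$ must be nondecreasing on $(0,+\infty)$; for a pure power function this forces $\Ll n - \alpha - 1 \geq 0$, which is exactly the claimed bound $\alpha \leq \Ll n - 1$. The only real obstacle is the boundary-vanishing issue just discussed; once the applicability of Theorem \ref{hadamsemith} to $\Phi_\alpha$ is in hand, the argument reduces to a one-line homogeneity computation.
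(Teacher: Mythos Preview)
Your proof is correct and follows essentially the same approach as the paper: both apply the monotonicity \reff{monosemi} from Theorem \ref{hadamsemith} to $\Phi_\alpha$ and exploit that $\Phi_\alpha(x)/x_n$ is homogeneous of degree $-(\alpha+1)$. The paper obtains only the upper bound $\mu(r)\leq r^{-(\alpha+1)}$ via the normalization $\phi_\alpha(\theta)\leq \sin\theta$ and argues by contradiction, whereas you compute $\mu(r)$ exactly using the scaling $\mathcal{B}_r=r\,\mathcal{B}_1$; this is a minor technical variation on the same idea, and your explicit handling of the boundary-vanishing issue is a point the paper leaves implicit.
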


\begin{proof} It is enough to observe that the function $\phi_\alpha$ can be normalized in order  to satisfy
$$
\phi_\alpha (\theta )\leq \sin \theta \qquad \hbox{ for all } \ \theta \in \left[ 0, \frac{\pi}{2} \right]\, .
$$
Therefore, the infimum function $\mu (r)$  for $\Phi_\alpha$ satisfies $\mu (r)\leq \frac{1}{r^{\alpha +1}}$, and  \reff{monosemi} is violated for $\alpha > \Ll n-1$.

\end{proof}

\begin{remark}{\rm  
We cannot prove that the exponent appearing in the growth condition \reff{monosemi} is sharp, since it is derived by a comparison argument with a subsolution, not a solution. In order to obtain the optimal condition, we can repeat the proof of Theorem \ref{hadamsemith} with the subsolution $\Phi$ replaced by  the solution $\Phi_\alpha$. In this case, we consider, for positive $r$, the function
$$
m (r) =\inf_{B_r^+} \frac{u(x)}{x_n}\, .
$$ 
We further observe that, by Hopf's Lemma, the function $\phi_\alpha$ satisfies  $\phi_\alpha'(0)>0$, so that, up to a normalization, one has
$$
c\, \sin \theta \leq \phi_\alpha (\theta )\leq \sin \theta \qquad \hbox{for all }\ \theta \in \left[ 0, \frac{\pi}{2}\right] \, ,
$$
with $c =\inf_{ \left( 0, \frac{\pi}{2}\right) } \frac{\phi_\alpha (\theta)}{\sin \theta} >0$ depending only on $\Lambda ,\ \lambda$ and $n$.  As in the proof of Theorem \ref{hadamsemith}, the comparison principle applied in the upper annular domain $B_R^+\setminus B_r^+$ then yields that $m(r)$ satisfies
$$
\dis m  (\rho )\geq \frac{m (r) \left( c\,  \rho^{-(\alpha +1)} -R^{- (\alpha +1)}\right) 
+ m (R) \left( r^{-(\alpha +1)} -c\, \rho^{-(\alpha +1)}\right)}
{r ^{-(\alpha +1)} -R^{-(\alpha +1)} }.
$$
for any fixed $R>r>0$ and all $r\leq \rho \leq R$. Hence,
$$
\rho^{\alpha +1} m(\rho )\geq c\, r^{\alpha +1} m(r) \qquad \hbox{ for all }\ \rho \geq r\, .
$$
}\end{remark}

\begin{remark} \label{emmepiu} {\rm 
All the statements we have given in this section for operator $\Mm$ correspond to analogous results  for operator $\Mp$. In particular, the function
$$
\hat{\Psi} (x) = \frac{x_n}{|x|^{\lL (n-1)+1}}\, ,
$$
satisfies, in the classical sense,
$$
 \Mp (D^2\hat{\Psi})\geq 0\qquad  \hbox{in }\ \ \Semi\, .
$$
Note that $\hat \Psi$
 is, up to a negative constant, the partial derivative with respect to $x_n$ of the radial solution for $\Mp$
$$
\psi (x)=
\left\{
\begin{array}{ll}
\dis -|x|^{2-\beta} & \qquad \mbox{if}\ \beta <2\\[2ex]
\dis-\log |x| & \qquad
\mbox{if} \  \beta =2 \\[2ex]
\dis |x|^{2-\beta }& \qquad 
\mbox{if} \   \beta > 2 \\
\end{array}
\right.
$$
with $\beta =\lL (n-1)+1$.
Therefore, the same proof of Theorem \ref{hadamsemith}, which is in this case even simpler, yields that, if $u$ is a supersolution for $\Mp$, then the function
$$
m (r) =\inf_{B_r^+} \frac{u(x)}{x_n}
$$
    is a concave function of $r^{-\left( \lL (n-1)+1\right)}$,
i.e. for every fixed $R>r>0$ and for all $r\leq \rho \leq R$ one has 
$$
\dis m  (\rho )\geq \frac{m (r) \left( \rho^{-\left( \lL (n-1)+1\right)} -R^{-\left( \lL (n-1)+1\right)}\right) 
+m (R) \left( r^{-\left( \lL (n-1)+1\right)} -\rho^{-\left( \lL (n-1)+1\right)}\right)}
{r ^{-\left( \lL (n-1)+1\right)} -R^{-\left( \lL (n-1)+1\right)} }.
$$
Hence,   
$$
r\in (0,+\infty )\mapsto m  (r)\, r^{ \lL (n-1)+1} \quad \hbox{ is
nondecreasing .}
$$
As far as  supersolutions are concerned, 
 the same proof of Theorem \ref{solfsemi} carried out for operator $\Mp$ shows  that, under the assumption $\lL n\geq 1$, the function
$$
\dis \Psi (x)=\frac{x_n^{\frac{\lambda}{\Lambda}}}{|x|^{\frac{\lambda}{\Lambda}(n+1)-1}}
$$
satisfies, in the classical sense,
$$
 \Mp (D^2\Psi)\leq 0\qquad  \hbox{in }\ \ \Semi\, .
$$
It then follows that the  positive singular homogeneous solution $\Psi_\alpha$ for operator $\Mp$ has a positive scaling  exponent $\alpha =\alpha (\Mp)$ satisfying
$$
\lL n-1 \leq \alpha \leq \lL (n-1)\, .
$$
This improves the lower bound $\alpha \geq \lL (n-1)-1$ proved in \cite{Miller} by comparing $\Psi_\alpha$ with the radial (super)solution $\psi$ in the case $\lL (n-1)>1$.
}\end{remark}

\section{ Explicit subsolutions and a  Liouville type theorem}

In this section we give an elementary proof, purely based  on the comparison principle,  of the following Liouville type theorem for inequalities with   superlinear zero order terms.

\begin{theorem} \label{Lio2} Let $n\geq 2$ and $1\leq p\leq \frac{\Ll n+1}{\Ll n-1}$. Then,  $u\equiv 0$ is the only nonnegative viscosity solution of inequality 
\beq \label{mmp}
\Mm (D^2u)+u^p\leq 0\qquad \hbox{ in }\ \Semi\, .
\eeq
\end{theorem}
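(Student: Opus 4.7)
The plan is to argue by contradiction: suppose $u\not\equiv 0$ is a nonnegative viscosity solution of \reff{mmp}. The strong maximum principle applied to $u$ as a supersolution of $\Mm(D^2u)\leq 0$ gives $u>0$ in $\Semi$; after replacing $u(x',x_n)$ by $u(x',x_n+\delta)$ for some small $\delta>0$, I may assume $u>0$ on $\overline{\Semi}$, so that Lemma \ref{mupro} and Theorem \ref{hadamsemith} apply.

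The central input is the lower bound obtained by combining \reff{mmp} with Theorem \ref{hadamsemith}. Since $u$ is a positive supersolution of the homogeneous extremal equation, \reff{monosemi} yields $\mu(r)\,r^{\Ll n}\geq c_0:=\mu(r_0)\,r_0^{\Ll n}>0$ for every $r\geq r_0$; by Lemma \ref{mupro}, this translates to the pointwise estimate
$$
u(x)\geq c_0\,\frac{x_n}{d(x)^{\Ll n}}=c_0\,\Phi(x)\qquad\hbox{for all } x\in \Semi \hbox{ with } d(x)\geq r_0,
$$
and plugging back into \reff{mmp} gives $-\Mm(D^2u)\geq u^p\geq c_0^p\,\Phi^p$ on the same region.

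For the strictly subcritical range $1\leq p<\frac{\Ll n+1}{\Ll n-1}$, I would bootstrap. Using the ansatz $W=A\,x_n^a/|x|^b$ and matching powers of $x_n$ and $|x|$ in $\Phi^p$, the inequality $\Mm(D^2W)+c_0^p\Phi^p\leq 0$ forces $a=\Ll p$ and $b+2=(\Ll(n+1)-1)p$; the resulting sign condition on the bracket in \reff{ineq1} is satisfiable (with appropriate $A$) precisely when $p<\frac{\Ll n+1}{\Ll n-1}$. Comparison in annular domains $\mathcal{B}_R\setminus\mathcal{B}_{r_0}$, with $R\to+\infty$, then upgrades the estimate to $u\geq W$. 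Since the new homogeneity $a-b=2-(\Ll n-1)p$ is strictly greater than the homogeneity $1-\Ll n$ of $\Phi$, the bound has strictly improved; iterating yields lower bounds $W_k$ with homogeneity $\alpha_k=p\,\alpha_{k-1}+2$, which diverges to $+\infty$ whenever $\alpha_0=1-\Ll n$ lies strictly above the fixed point $\alpha^\ast=-2/(p-1)$ of this recursion, i.e.\ exactly when $p$ is subcritical. For large $k$ this forces $u(x_0)=+\infty$ at interior points, contradicting the viscosity solution property.

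The critical case $p=\frac{\Ll n+1}{\Ll n-1}$ is the most delicate, since $\alpha_0=\alpha^\ast$ and the power ansatz is exactly marginal. As indicated in the introduction, the plan here is to construct an explicit solution $W$ of $-\Mm(D^2W)=c_0^p\Phi^p$ in $\{d(x)\geq r_0\}$ by modifying the subsolution of Theorem \ref{solfsemi} with a logarithmic correction---say $W=(A+B\log d(x))\,\Phi(x)$---and to deduce $u\geq W$ by comparison in $\mathcal{B}_R\setminus\mathcal{B}_{r_0}$ with $R\to+\infty$. Since $W/\Phi\to+\infty$ as $d(x)\to+\infty$, this strengthened bound, when fed back into \reff{mmp}, produces a new lower bound whose compound logarithmic growth eventually violates Hadamard's monotonicity \reff{monosemi}, yielding the contradiction. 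The main technical obstacle is to verify the differential inequality for the logarithmically-corrected ansatz: because $\Mm$ is strongly nonlinear and standard polar-coordinate separation fails, the cross-derivative contributions coming from the $\log d(x)$ factor must be handled via Lemma \ref{eigenvalues}, and the sign of $\Mm(D^2W)+c_0^p\Phi^p$ controlled by estimates in the spirit of the proof of Theorem \ref{solfsemi}.
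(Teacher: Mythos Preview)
Your setup (strong maximum principle, translation, the lower bound $u\geq c_0\Phi$ from \reff{monosemi}) agrees with the paper's. But the contradiction you aim for never closes, because you are missing the one ingredient that actually produces the contradiction in the paper: an \emph{upper} bound on $\mu(r)$.

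In the critical case $p=\frac{\Ll n+1}{\Ll n-1}$ you assert that the logarithmically improved lower bound $u\geq (A+B\log d)\,\Phi$, once fed back into \reff{mmp}, ``eventually violates Hadamard's monotonicity \reff{monosemi}.'' It does not. Statement \reff{monosemi} says only that $r\mapsto r^{\Ll n}\mu(r)$ is nondecreasing; this is itself a lower bound and is perfectly compatible with $r^{\Ll n}\mu(r)\geq c\log r$, or with any iterated logarithmic strengthening. No amount of bootstrapping lower bounds can contradict a monotonicity statement pointing in the same direction. The paper closes the argument by first deriving, via the rescaling $u_r(x)=u(rx)$ and a cutoff test function in a fixed ball, the \emph{upper} bound $\mu(r)\leq C\,r^{-(p+1)/(p-1)}$ (equation \reff{upb2}); at the critical exponent this reads $r^{\Ll n}\mu(r)\leq C$, and it is this upper bound that clashes with the logarithmic lower bound coming from Lemma \ref{Gammasottosol}.

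In the subcritical range your bootstrap $\alpha_k=p\,\alpha_{k-1}+2$ is a genuinely different route from the paper's one--step argument (where the upper bound \reff{upb2} contradicts \reff{monosemi} directly). But it has its own gap: the comparison in $\mathcal{B}_R\setminus\mathcal{B}_{r_0}$ with $R\to+\infty$ requires $W_k\to 0$ on $\partial\mathcal{B}_R$, which fails as soon as the homogeneity $\alpha_k$ becomes nonnegative---exactly the regime you need to force $u(x_0)=+\infty$. So the iteration stalls before reaching the contradiction. (Incidentally, to compare $W$ with $u$ from below you need $W$ to be a \emph{subsolution}, i.e.\ $\Mm(D^2W)+c_0^p\Phi^p\geq 0$, not $\leq 0$ as written.)
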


To prove the above result in the limiting  case $p=\frac{\Ll n+1}{\Ll n-1}$, we will compare the supersolution $u$ with an explicit subsolution of the equation
$$
-\Mm (D^2v)= \left( \frac{x_n}{d^{\Ll n}}\right)^{\frac{\Ll n+1}{\Ll n-1}}
$$
where $d=d(x)$ is as in \reff{di}. Such a subsolution is constructed in the following preliminary result.

\begin{lemma} \label{Gammasottosol} There exist positive constants $a,\ b>0$ and $d_0\geq 1$, depending only on $\lambda,\ \Lambda$ and $n$, such that the function
\beq \label{Gamma}
\Gamma (x) =\frac{x_n}{d^{\Ll n}} \left( a \ln d + b \left( \frac{x_n}{|x|}\right)^2\right)
\eeq
satisfies, in the classical sense,
\beq \label{eqGamma}
-\Mm (D^2\Gamma )\leq  \left( \frac{x_n}{d^{\Ll n}}\right)^{\frac{\Ll n+1}{\Ll n-1}}\qquad \hbox{ in } \Semi \setminus \overline{ \mathcal{B}_{d_0}}\, .
\eeq
\end{lemma}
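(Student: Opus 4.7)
I would mimic the eigenvalue analysis of Theorem~\ref{solfsemi}, applied to the more elaborate function $\Gamma$. Since $\Gamma$ depends only on $r=|x|$ and $x_n$, its Hessian automatically has the structure covered by Lemma~\ref{eigenvalues}, with $v=x/|x|$ and $w=e_n$; in particular its $n$ eigenvalues admit a closed-form expression, so $\Mm(D^2\Gamma)$ can be computed directly. The plan is to split $\Gamma$ into two summands that can be handled separately, and then match the free constants.

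\textbf{First step: the two summands.} Using the identity $d = |x|^{1+k}/x_n^k$ with $k=(\Lambda-\lambda)/(\Lambda n)$, I would rewrite
$$\Gamma = a\,\Phi\big[(1+k)\ln|x| - k\ln x_n\big] + b\,\frac{x_n^{\Ll+2}}{|x|^{\Ll(n+1)+1}},$$
where $\Phi = x_n^{\Ll}/|x|^{\Ll(n+1)-1}$ is the subsolution from Theorem~\ref{solfsemi}. The last summand is precisely of the monomial form $x_n^{\tilde\alpha}/|x|^{\tilde\beta}$ treated there: formula~\reff{ineq1} with $\tilde\alpha=\Ll+2$, $\tilde\beta=\Ll(n+1)+1$ produces a closed-form lower bound of shape $(2\lambda b\,\Phi/|x|^2)\big[(\Ll+2)-(\Ll(n+1)+1)s^2\big]$, where $s=x_n/|x|$. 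For the logarithmic summand $a\Phi \ln d$ I would apply the Leibniz rule
$$D^2(\Phi\ln d) = (\ln d)\,D^2\Phi + \Phi\, D^2\ln d + \nabla\Phi\otimes\nabla\ln d + \nabla\ln d\otimes\nabla\Phi,$$
together with superadditivity of $\Mm$. The leading piece gives $a(\ln d)\,\Mm(D^2\Phi)\geq 0$ for $d\geq 1$ by Theorem~\ref{solfsemi}; the remaining correction tensor is again of the Lemma~\ref{eigenvalues} form, and estimating its discriminant from above as in the passage between \reff{Mmfi} and \reff{ineq1} should yield a lower bound of the form $-aC\,\Phi/|x|^2$ times an explicit polynomial in $s$ with coefficients depending only on $\Lambda,\lambda,n$.

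\textbf{Matching, and the main obstacle.} Combining the two bounds, and using that for $p=(\Ll n+1)/(\Ll n-1)$ one has $\Phi^p = (\Phi/|x|^2)\cdot s^{2\Ll/(\Ll n-1)}$ (via $\Phi = s^\Ll/|x|^{\Ll n-1}$), the desired inequality $-\Mm(D^2\Gamma)\leq \Phi^p$ reduces to a single pointwise comparison in $s\in (0,1]$ parameterized by $a$, $b$ and $\ln d$. I would fix the constants in the order $b,a,d_0$: first $b$ to tune the $s^2$-contribution against the angular profile appearing in the correction of Step~2, then $a$ large to dominate the residual negative terms, and finally $d_0$ sufficiently large (with $d_0\geq 1$ so that $\ln d\geq 0$) so that the resulting inequality holds on all of $\Semi\setminus\overline{\mathcal{B}_{d_0}}$. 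The main obstacle is exactly the critical nature of $p$: both sides scale as $\Phi/|x|^2$, so soft estimates do not suffice and the angular profiles in $s$ must be matched on the nose. The boundary regime $s\to 0$, where some entries of $D^2\Gamma$ blow up like $1/x_n^2$, is especially delicate and is precisely what forces the threshold $d_0$ to be taken large.
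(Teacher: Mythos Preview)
Your split $\Gamma=a\Gamma_1+b\Gamma_2$ and your treatment of $\Gamma_2$ via \reff{ineq1} with $\tilde\alpha=\Ll+2$, $\tilde\beta=\Ll(n+1)+1$ match the paper exactly. The gap is in your treatment of $\Gamma_1=\Phi\ln d$.

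Writing $D^2\Gamma_1=(\ln d)\,D^2\Phi+R$ with $R=\Phi\,D^2\ln d+\nabla\Phi\otimes\nabla\ln d+\nabla\ln d\otimes\nabla\Phi$ and using superadditivity to discard $(\ln d)\,\Mm(D^2\Phi)\ge 0$ is \emph{too crude}: the remainder $R$ does not satisfy a bound of the form $\Mm(R)\ge -C\,\Phi/|x|^2$ times a polynomial in $s$. Indeed, since $D^2\ln d$ contains the term $k\,x_n^{-2}\,e_n\otimes e_n$ and the cross terms contribute $-2k\Ll\,x_n^{-2}\,e_n\otimes e_n$, the coefficient $b_R$ of $e_n\otimes e_n$ in $R$ equals $k(1-2\Ll)\,\Phi/x_n^2$, which is negative and blows up like $\Phi/(|x|^2 s^2)$ as $s=x_n/|x|\to 0$. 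Feeding this into Lemma~\ref{eigenvalues} shows that the smallest eigenvalue of $R$ behaves like $k(1-2\Ll)\,\Phi/(|x|^2 s^2)$ near the boundary, and hence $\Mm(R)\to -\infty$ there. The compensating positive $1/s^2$ piece lives precisely in $(\ln d)\,\Mm(D^2\Phi)$ (recall from \reff{mui} that $\Mm(D^2\Phi)\sim \lambda\,\Ll(\Ll-1)\,\Phi/(|x|^2 s^2)$ as $s\to 0$), and you have just thrown it away. The paper avoids this loss by computing the eigenvalues of $D^2\Gamma_1$ \emph{as a whole}; in formula~\reff{mmg1} the $(\rho/x_n)^2$ term and the leading part of $\sqrt{\mathcal{D}}$ cancel exactly after the discriminant estimate, leaving the clean bound $\Mm(D^2\Gamma_1)\ge -c_1\,\Phi/|x|^2$ with no boundary singularity.

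A secondary point: in the final matching the constant $a$ cannot be taken large. The $\Gamma_1$ piece contributes the \emph{bad} term $a c_1$ to the upper bound for $-\Mm(D^2\Gamma)$, while the \emph{good} constant $-b c_3$ that kills it comes from the $\Gamma_2$ estimate. One is forced to take $a=b c_3/c_1$ (and $b=1/c_2$), exactly as in the paper; enlarging $a$ only makes the inequality harder.
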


\begin{proof} Let us consider the two functions
$$
\Gamma_1(x)=\frac{x_n}{d^{\Ll n}} \ln d\
$$ 
and
$$
 \Gamma_2(x)=\frac{x_n}{d^{\Ll n}} \left( \frac{x_n}{\rho}\right)^2= \frac{x_n^{\Ll +2}}{\rho^{\Ll (n+1)+1}}\, ,
$$
with $\rho=|x|$. 
If $a, b >0$ and $\Gamma$ is given by \reff{Gamma}, then, being  $\Mm$  superadditive and positively homogeneous, we have that
\beq \label{semiest1}
-\Mm (D^2\Gamma) \leq  -a\,  \Mm (D^2\Gamma_1) - b \, \Mm (D^2\Gamma_2) \,.
\eeq
Therefore, in order to prove \reff{eqGamma}, we estimate separately the two terms appearing in the right hand side of  \reff{semiest1}.

As far as $\Gamma_1$ is concerned, definition \reff{di} of $d$ and a direct computation show that
$$
\begin{array}{ll}
\dis  D^2\Gamma_1 (x) = & \dis \!\!   (k+1) \frac{x_n}{d^{\Ll n+2}}\left( \frac{\rho}{x_n}\right)^{2k}  \left\{ \left[ \Ll n \left(\Ll (n+1)+1\right) \ln d -2 \Ll (n+1)\right] \frac{x}{\rho} \otimes \frac{x}{\rho} \right.\\[2ex]
& \dis  + \frac{k}{k+1} \left(\frac{\rho}{x_n}\right)^2\left( \left(\Ll\right)^2n \ln d -2\Ll +1\right) e_n\otimes e_n \\[2ex]
& \dis \left. -\frac{\rho}{x_n} \left(  \left(\Ll\right)^2n \ln d-2\Ll +1\right) \left( \frac{x}{\rho}\otimes e_n + e_n \otimes \frac{x}{\rho}\right) 
 - \left( \Ll n \ln d -1\right) I_n \right\}
\end{array}
$$
with $k= \frac{\Lambda -\lambda}{\Lambda n}$. According to Lemma \ref{eigenvalues}, the eigenvalues $\mu_1 ,\ldots , \mu_n$ of $D^2 \Gamma_1 (x)$ are
$$
\begin{array}{c}
\begin{array}{ll}
\dis \mu_1=   \frac{k+1}{2}\frac{x_n}{d^{\Ll n+2}} \left( \frac{\rho}{x_n}\right)^{2k} & \dis \!\!\!  \left[ \Ll n \left( \Ll (n-1)-1\right) \ln d-2\Ll  (n-1)\right. \\[2ex]
 & \dis \left. +\frac{k}{k+1}\left(\frac{\rho}{x_n}\right)^2 \left( \left( \Ll\right)^2n \ln d -2\Ll +1\right)+ \sqrt{\mathcal{D}}\right] \end{array}\\[6ex]
\begin{array}{ll}
\dis \mu_2=   \frac{k+1}{2}\frac{x_n}{d^{\Ll n+2}} \left( \frac{\rho}{x_n}\right)^{2k} & \dis \!\!\!  \left[ \Ll n \left( \Ll (n-1)-1\right) \ln d-2\Ll  (n-1)\right. \\[2ex]
 & \dis \left. +\frac{k}{k+1}\left(\frac{\rho}{x_n}\right)^2 \left( \left( \Ll\right)^2n \ln d -2\Ll +1\right)- \sqrt{\mathcal{D}}\right] \end{array}\\[6ex]
\dis \mu_i = -  (k+1) \frac{x_n}{d^{\Ll n+2}}\left( \frac{\rho}{x_n}\right)^{2k} \left( \Ll n \ln d -1\right)\, ,\quad 3\leq i\leq n\, ,
\end{array}
$$
where
\beq \label{D1}
\begin{array}{ll}
\dis \mathcal{D} =  & \!\!  \dis\left[ \left( \Ll (n-1)+1\right) \left( \Ll n \ln d -2\right) +\frac{k}{k+1} \left( \frac{\rho}{x_n}\right)^2 \left( \left( \Ll\right)^2 n\ln d -2 \Ll +1\right) \right]^2 \\[4ex]
& \dis
+ 4  \, \frac{\Ll (n-1)+1}{\Ll (n+1)-1} \left ( 1- \left( \frac{x_n}{\rho}\right)^2\right) \left( \frac{\rho}{x_n}\right)^2 \left( \left( \Ll\right)^2 n \ln d-2 \Ll +1\right) \left( \Ll n \ln d -1\right)
\end{array}
\eeq
For $d\geq d_0$,  with $d_0$ depending only on $\Lambda ,\ \lambda$ and $n$, it is easy to see that $\mu_1\geq 0$ and $\mu_i\leq 0$ for $2\leq i\leq n$. Therefore, one has
\beq \label{mmg1}
\begin{array}{ll}
\dis \Mm (D^2\Gamma_1 (x)) & \!\! \dis = \lambda \, \mu_1 +\Lambda\, \sum_{i=2}^n \mu_i\\[2ex]
 & \!\! \dis = - \frac{\lambda}{2} (k+1) \frac{x_n}{d^{\Ll n+2}} \left( \frac{\rho}{x_n}\right)^{2k} \left[  \Ll  \left( \Ll (n-1)+1\right) \left( - n \left( \Ll -1\right) \ln d +2\right) \right.\\[3ex]
  &  \dis \left. - \frac{k}{k+1} \left( \Ll +1\right) \left( \frac{\rho}{x_n}\right)^2 \left( \left(\Ll\right)^2 n \ln d -2\Ll +1\right) +\left( \Ll -1\right) \sqrt{\mathcal{D}} \right]
  \end{array}
  \eeq
  Moreover, from \reff{D1} it follows that
  $$
\begin{array}{rl}
\dis \mathcal{D} \leq   & \!\!\!\!  \dis\left[ \left( \Ll (n-1)+1\right) \left( \Ll n \ln d -2\right) +\frac{k}{k+1} \left( \frac{\rho}{x_n}\right)^2 \left( \left( \Ll\right)^2 n\ln d -2 \Ll +1\right) \right]^2 \\[4ex]
& \dis
+ 4  \, \frac{\Ll (n-1)+1}{\Ll (n+1)-1}  \left( \frac{\rho}{x_n}\right)^2 \left( \left( \Ll\right)^2 n \ln d-2 \Ll +1\right) \left( \Ll n \ln d -1\right)\\[4ex]
\leq & \!\!\!\!  \dis  \left[ \left( \Ll (n-1)+1\right) \left( \Ll n \ln d -\frac{2\Ll}{\Ll +1}\right) +\frac{\Ll +1}{\Ll (n+1)-1} \left( \frac{\rho}{x_n}\right)^2 \left( \left( \Ll\right)^2 n \ln d-2 \Ll +1\right) \right]^2
\end{array}
$$
The above estimate plugged into \reff{mmg1} gives
\beq \label{semiest2}
\Mm  (D^2\Gamma_1 (x)) \geq - \frac{ 2 \lambda}{\Ll +1} \frac{ \left( \Ll (n-1)+1\right) \left( \Ll (n+1)-1\right)}{n} \frac{x_n}{d^{\Ll n+2}} \left( \frac{\rho}{x_n}\right)^{2k}
=- \frac{c_1}{d^{\Ll n+1}}\left( \frac{x_n}{\rho}\right)^{1-k}
\eeq
with $c_1=\frac{ 2 \lambda}{\Ll +1} \frac{ \left( \Ll (n-1)+1\right) \left( \Ll (n+1)-1\right)}{n}$.

Let us now turn to estimate $\Mm  (D^2\Gamma_2 (x))$. By applying inequality \reff{ineq1}  with $\alpha =\Ll +2$ and $\beta= \Ll (n+1)+1 > \alpha$, we obtain
\beq \label{semiest3}
\begin{array}{rl}
\dis \Mm  (D^2\Gamma_2 (x)) \geq  & \dis -  2 \lambda    \frac{x_n^{\Ll +2}}{\rho^{\Ll (n+1)+3}} \left(  \Ll (n+1)+1 -\left( \Ll +2\right) \left( \frac{\rho}{x_n}\right)^2\right)\\[2ex]
= & \dis
- \frac{1}{d^{\Ll n+1}} \left( \frac{x_n}{\rho}\right)^{1-k}  \left( c_2 \left( \frac{x_n}{\rho}\right)^2 - c_3\right) 
\end{array}
\eeq
where $c_2=2 \left( \Lambda (n+1) +\lambda \right)$ and $c_3= 2\left( \Lambda +2\lambda\right)$.

Inequalities \reff{semiest2} and \reff{semiest3} combined with \reff{semiest1} with $a=\frac{c_3}{c_1 c_2}$ and $b=\frac{1}{c_2}$ then imply
$$
-\Mm (D^2\Gamma) \leq \frac{1}{d^{\Ll n+1}} \left( \frac{x_n}{\rho}\right)^{3-k}
$$
We finally observe that , since $\Ll (n-1)\geq 1$, one has $3-k\geq (1+k) \frac{\Ll n+1}{\Ll n-1}$. Hence
$$
 -\Mm (D^2\Gamma) \leq \frac{1}{d^{\Ll n+1}} \left( \frac{x_n}{\rho}\right)^{ (1+k) \frac{\Ll n+1}{\Ll n-1}}= \left( \frac{x_n}{d^{\Ll n}}\right)^{\frac{\Ll n+1}{\Ll n-1}}
 $$
\end{proof}

\begin{remark} {\rm Let us observe that in the linear planar case, that is for $\Lambda =\lambda$ and $n=2$ inequality \reff{eqGamma} becomes equality.}
\end{remark}

\noi {\sl Proof of Theorem \ref{Lio2}.} 
For a contradiction, let us assume that there exists a non trivial solution $u$ of \reff{mmp}. As in the previous section, by using the strong maximum principle and by translating upward the domain if necessary, we can assume without loss that $u$ is strictly positive in $\overline{\Semi}$.

Let us re--scale inequality \reff{mmp}, that is, for every $r>0$ let us set
$$u_r(x) =u(r x)\, .
$$
Then, $u_r$ satisfies
\beq \label{mmpr}
u_r>0 \ \hbox{ in } \overline{\Semi}\, ,\qquad  \Mm (D^2 u_r) + r^2 u_r^p\leq 0 \ \hbox{ in } \Semi \, .
\eeq
We now test inequality \reff{mmpr} with a suitable cut--off function, chosen    constant on the ball $B_{1/2}((0,1))$ centered at $(0,1)$ and having radius $1/2$, and negative outside $B_{3/4}((0,1))$. Precisely, let us select  a smooth, concave, non increasing function $\zeta :[0,+\infty )\to \R$ satisfying
$$
\zeta(t) =\left\{
\begin{array}{ll}
\dis 1 & \dis \hbox{ for }\ 0\leq t \leq 1/2\\
>0  & \dis \hbox{ for }\ 1/2 < t  < 3/4 \\
\leq 0 & \dis \hbox{ for }\  t\geq 3/4
\end{array}
\right.
$$
and let us consider the radial function
$$
z (x) = \left( \inf_{B_{1/2}((0,1))} u_r \right) \zeta (|x- (0,1)|)\, .
$$
Note that $u_r \geq z$ in $\overline{B_{1/2}((0,1))}$, $u_r = z$ at some point in  $\partial B_{1/2}((0,1))$ and $u_r >  z$ outside $B_{3/4}((0,1))$. Therefore, the infimum of $u-z$ is non positive and it is achieved at some point $x^*\in B_{3/4}((0,1)) \setminus B_{1/2}((0,1))$. By definition of viscosity solution of inequality \reff{mmpr}, it then follows
\beq \label{test1}
u_r(x^*)^p \leq  \frac{C}{r^2} \inf_{B_{1/2}((0,1))} u_r\, ,
\eeq
where 
$$
\dis C=\sup_{B_{3/4}((0,1))} \left( -\Mm (D^2 \zeta )\right) =\sup_{B_{3/4}((0,1))} \left( -\Lambda\,  \Delta  \zeta \right)= - \Lambda \, \inf_{\left[ 1/2, 3/4\right]} \left( \zeta ''(t) +(n-1) \frac{\zeta'(t)}{t}\right)
$$
 is a positive constant depending only on $\Lambda$ and $n$.
Here and throughout in the sequel,  we will use $c$ and $C$ to denote positive constants,   which may change from line to line, not  depending on $r$. 

By the nonlinear Hadamard three--spheres theorem for positive supersolutions of the equation $\Mm (D^2u)=0$ (see \reff{hadam1} and Theorem 3.1 in \cite{CL}), we also have
$$
\inf_{B_{1/2}((0,1))} u_r \leq C\, \inf_{B_{3/4}((0,1))} u_r \, ,
$$
and from \reff{test1} it  then follows
$$
\left(  \inf_{B_{3/4}((0,1))} u_r \right)^p\leq u_r(x^*)^p\leq \frac{C}{r^2}  \inf_{B_{3/4}((0,1))} u_r \, .
$$
Now,  the contradiction is evident if $p=1$. For $p>1$, we re--scale back from  $u_r$ to $u$ and we further observe that
$$
 \inf_{B_{3/4}((0,1))} u_r  =  \inf_{B_{\frac{3}{4}r}((0,r))} u \geq \frac{r}{4} \inf_{B_{\frac{3}{4}r}((0,r))} \frac{u}{x_n} \geq \frac{r}{4}\,  \inf_{\mathcal{B}_{2r}} \frac{u}{x_n} =\frac{r}{4} \,  \mu (2r)\, ,
 $$
 where $\mu$ is defined in \reff{mu}. Hence, we obtain
\beq \label{upb2}
\mu (r)\leq \frac{C}{r^{\frac{p+1}{p-1}}}\, .
\eeq
If $ \frac{p+1}{p-1}> \Ll n$, that is if $p<\frac{\Ll n+1}{\Ll n-1}$, then \reff{upb2} contradicts the monotonicity property \reff{monosemi}. Thus, only the case $p=\frac{\Ll n+1}{\Ll n-1}$ remains to be considered.
In this case, \reff{upb2} gives the upper bound
\beq \label{upb3}
r^{\Ll n} \mu (r)\leq C \quad \hbox{ for all }\ r>0\, .
\eeq
On the other hand, by \reff{monosemi} we also have
$$
r^{\Ll n} \mu (r)\geq d_0^{\Ll n} \mu (d_0)= c >0 \quad \hbox{ for all }\ r \geq d_0\, ,
$$
which implies
$$
u(x) \geq c \frac{x_n}{ d(x)^{\Ll n}} \quad \hbox{ for }\ x\in \Semi \setminus \mathcal{B}_{d_0}\, ,
$$
where $d_0>0$ is given by Lemma \ref{Gammasottosol}. By inequality \reff{mmp} with $p=\frac{\Ll n+1}{\Ll n-1}$ it then follows that $u$ satisfies
\beq \label{eq2}
-\Mm (D^2u) \geq c \left( \frac{x_n}{ d(x)^{\Ll n}} \right)^{\frac{\Ll n+1}{\Ll n-1}} \quad \hbox{ in }\   \Semi \setminus \mathcal{B}_{d_0}\, .
\eeq
By Lemma \ref{Gammasottosol}, the opposite inequality is satisfied by  $\gamma\, \Gamma (x)$, where $\Gamma$ is  given by \reff{Gamma} and $0<\gamma \leq c$. If $\gamma$ is further assumed to satisfy $\gamma \leq \mu (d_0) \frac{d_0^{\Ll n}}{a \ln d_o +b}$, then we have
$$
\gamma\, \Gamma (x) \leq u(x) \quad \hbox{ on } \partial \mathcal{B}_{d_0}\, .
$$
Moreover, for any fixed $\epsilon >0$,  let $R>0$ be large enough so that
$$
\gamma\, \Gamma (x) \leq \epsilon \quad \hbox{ for } x\in \Semi \setminus \mathcal{B}_{R}\, .
$$
The comparison principle applied to $\gamma\, \Gamma$ and $u+ \epsilon$ in $\mathcal{B}_{R} \setminus \overline{\mathcal{B}_{d_0}}$ then gives $\gamma\, \Gamma (x)\leq u(x) +\epsilon$ in $\overline{\mathcal{B}_{R}} \setminus  \mathcal{B}_{d_0}$ for all $R$ sufficiently large. If we let first $R\to \infty$ and then $\epsilon \to 0$,   we obtain
$$
u(x)\geq \gamma\, \Gamma (x) \quad \hbox{ for } x\in \Semi \setminus \mathcal{B}_{d_0}\, ,
$$
which yields, by \reff{Gamma},
$$u(x) \geq c\, \frac{x_n}{d(x)^{\Ll n}}\ln d(x)\quad \hbox{ for } x\in \Semi \setminus \mathcal{B}_{d_0}\, .
$$
By Lemma \ref{mupro}, this implies that $r^{\Ll n} \mu(r) \geq c\, \ln r$ for all $r\geq d_0$, and this contradicts  \reff{upb3}.

\hfill $\Box$

\begin{remark} {\rm Theorem \ref{Lio2} of course still holds if $u$ is a supersolution in the exterior domain $\Semi \setminus \overline{B_r}$ for any $r>0$. In fact, in this case inequality \reff{mmeq} is satisfied in the translated halfspace $\{ x\in \R^n \; :\;  x_n>r \}$.}
\end{remark}

\begin{remark} {\rm By the characterization \reff{p*} of the critical exponents $p_*$ and $p^*$, and by the lower bound \reff{lbound}, we know that the Liouville property does not hold for inequality \reff{mmp} if either $p<-1$ or $p>\frac{\Ll (n-1)+2}{\Ll(n-1)}$. This can be checked also directly, by finding some explicit supersolution $u$. Indeed, it is immediate to verify that, if $p<-1$, then $u(x)= x_n^\delta$, with $1>\delta > \frac{2}{1-p}$, is a supersolution in the halfspace $\{ x_n \geq \left( \Lambda \, \delta (1-\delta)\right)^{\frac{1}{\delta (p-1)+2}}\}$.
On the other hand,  for $p>\frac{\Ll (n-1)+2}{\Ll(n-1)}$ we can consider the function
$$
u(x)=\frac{x_n}{|x|^\beta}
$$
with $\Ll (n-1)+1> \beta >\frac{p+1}{p-1}$, which satisfies, by  formula \reff{Mmfi} with $\alpha =1$, 
$$
\begin{array}{rl}
\dis \Mm (D^2 u) = & \dis  \frac{\beta \lambda }{2} \frac{x_n}{|x|^{\beta +2}} \left[ \left( \Ll +1\right) \beta -2 \Ll (n-1) -2  - \left( \Ll -1\right) \sqrt{ \beta^2 +4 \left( \left( \frac{|x|}{x_n}\right)^2 -1\right)}\right]\\[4ex]
 \leq & \dis - \beta \lambda \frac{x_n}{|x|^{\beta +2}} \left( \Ll (n-1)+1 -\beta \right)\leq - u^p
 \end{array}
$$
for $x\in \Semi \setminus \overline{B_r}$, with $r=\left( \lambda \beta \left( \Ll (n-1)+1 -\beta \right)\right)^{-\frac{1}{\beta(p-1)-p-1}}$.

Let us observe that for $\Lambda >\lambda$, one has $\frac{\Ll n+1}{\Ll n-1}< \frac{\Ll (n-1)+2}{\Ll(n-1)}$, so that  in this case the existence or non existence of solutions for \reff{mmp} is somehow indeterminate  for $\frac{\Ll n+1}{\Ll n-1}<p \leq  \frac{\Ll (n-1)+2}{\Ll(n-1)}$.

Analogously, by \reff{p*} and Remark \ref{emmepiu}, it follows that the inequality
$$
\Mp  (D^2u)+u^p\leq 0\qquad \hbox{ in }\ \Semi
$$
does not have any positive solution for $-1 \leq p \leq \frac{\lL (n-1)+2}{\lL (n-1)}$. Positive solutions do exist if either $p<-1$ or  $p> \frac{\lL n +1}{\lL n-1}$, provided that $\lL n>1$. Note that if $\lL n\leq 1$, no upper bound for $p^*(\Mp)$ is given.
}
\end{remark}

\begin{remark} {\rm By applying the proof of Theorem 5.1 given in \cite{AS}, with the functions $\Psi^+$ and $\Psi^-$ there replaced respectively by $\Phi_\alpha$ given in \reff{fia} and by $x_n$, and by using Corollary \ref{alfa}, a more general result than Theorem \ref{Lio2} can be obtained. Precisely, it can be proved the following statement:
\\
\noi \emph{  let $r_0\geq 0$, $\gamma >-2$ and $f:(0,+\infty )\to (0,+\infty )$ be continuous with
$$
\liminf_{t \to 0} t^{-\frac{\Ll n+1+\gamma}{\Ll n-1}} f(t) >0 \quad \hbox{ and }\quad \liminf_{t \to +\infty} t^{1+\gamma} f(t) >0\, .
$$
Then,  there does not exist any positive solution of
$$
\Mm (D^2u) +|x|^\gamma f(u) \leq 0 \qquad \hbox{in }\ \Semi \setminus \overline{B_{r_0}}\, .
$$
}Due to the precence of the general nonlinearity $f$,  the proof relies on Alexandrov--Bakelmann--Pucci estimate  and a weakened  form of weak Harnack inequality. }\end{remark}

\end{document}